\documentclass[11pt,english]{amsart}

\usepackage{amsmath,leftidx,amsthm}
\usepackage[all]{xy}
\usepackage{xspace}
\usepackage[psamsfonts]{amssymb}
\usepackage[latin1]{inputenc}
\usepackage{graphicx,color}
\usepackage{hyperref,fancyhdr}
\usepackage{fullpage}

\definecolor{new}{rgb}{0.7,0,0.6}

\newtheorem*{question}{\bf Question}

\newtheorem{theorem}{\bf Theorem}[section]
\newtheorem{proposition}[theorem]{\bf Proposition}
\newtheorem{definition}[theorem]{\bf Definition}
\newtheorem{Theorem}{\bf Theorem}

\newtheorem{lemma}[theorem]{\bf Lemma}
\newtheorem{corollary}[theorem]{\bf Corollary}

\newtheorem{Definition}[Theorem]{\bf Definition}

\theoremstyle{remark}
\newtheorem*{remark}{\bf Remark}
\newtheorem{exmp}{\bf Example}

\def\C{{\mathbb C}}

\def\R{{\mathbb R}}
\def\Z{{\mathbb Z}}

\def\Q{{\mathbb Q}}

\def\D{\mathbb{D}}

\def\p{\mathbb{P}}

\def\rat{\textup{Rat}}

\def\PGL{\textup{PGL}}

\def\Hom{\textup{Hom}}

\newcommand{\col}{\,{:}\,}

\usepackage{versions}
\usepackage{textcomp,listings}

\definecolor{green}{rgb}{0,0.5,0}
\definecolor{dkgreen}{rgb}{0,0.6,0}
\definecolor{gray}{rgb}{0.5,0.5,0.5}
\definecolor{mauve}{rgb}{0.58,0,0.82}
\lstnewenvironment{python}[1][]{
\lstset{
  frame=single,                   
  language=python,                          
  basicstyle=\ttfamily\small, 
  numbers=left,                             
  numberstyle=\scriptsize\color{black},  
  stepnumber=1,                   
  numbersep=7pt,                  
  backgroundcolor=\color{white},      
  stringstyle=\color{red},
  showspaces=false,               
  showstringspaces=false,         
  showtabs=false,                 
  tabsize=2,                      
  captionpos=b,                   
  breaklines=true,                
  breakatwhitespace=false,        
  rulecolor=\color{black},        
  framexleftmargin=1mm, framextopmargin=1mm, frame=shadowbox, 
  alsoletter={1234567890},
  otherkeywords={\ , \}, \{},
  keywordstyle=\color{blue},       
  stringstyle=\color{mauve},         
  emph={access,and,break,class,continue,def,del,elif ,else,%
  except,exec,finally,for,from,global,if,import,in,i s,%
  lambda,not,or,pass,print,raise,return,try,while},
  emphstyle=\color{black}\bfseries,
  emph={[2]True, False, None, self},
  emphstyle=[2]\color{blue},
  emph={[3]from, import, as},
  emphstyle=[3]\color{blue},
  upquote=true,
  morecomment=[s]{"""}{"""},
  commentstyle=\color{dkgreen},       
  emph={[4]1, 2, 3, 4, 5, 6, 7, 8, 9, 0},
  emphstyle=[4]\color{blue},
  literate=*{:}{{\textcolor{blue}:}}{1}%
  {=}{{\textcolor{blue}=}}{1}%
  {-}{{\textcolor{blue}-}}{1}%
  {+}{{\textcolor{blue}+}}{1}%
  {*}{{\textcolor{blue}*}}{1}%
  {!}{{\textcolor{blue}!}}{1}%
  {(}{{\textcolor{blue}(}}{1}%
  {)}{{\textcolor{blue})}}{1}%
  {[}{{\textcolor{blue}[}}{1}%
  {]}{{\textcolor{blue}]}}{1}%
  {<}{{\textcolor{blue}<}}{1}%
  {>}{{\textcolor{blue}>}}{1},%
}}{}

\excludeversion{code}

\def\cal{\mathcal}
\newcommand\mycom[2]{\genfrac{}{}{0pt}{}{#1}{#2}}

\def\and{{\quad\text{and}\quad}}

\setcounter{tocdepth}{1}

\title{Symmetrization of rational maps: arithmetic properties and families of Latt\`es maps of $\p^k$}
\author{Thomas Gauthier}
\address{LAMFA, Universit\'e de Picardie Jules Verne, 33 rue Saint-Leu, 80039 AMIENS Cedex 1, FRANCE}
\email{thomas.gauthier@u-picardie.fr}
\author{Benjamin Hutz}
\address{Saint Louis University, 220 N. Grand Blvd., St.Louis, MO 63103, USA}
\email{benjamin.hutz@slu.edu}
\author{Scott Kaschner}
\address{Butler University, 4600 Sunset Ave., Indianapolis, IN 46208, USA}
\email{skaschne@butler.edu}

\thanks{The first author is partially supported by the ANR grant Lambda ANR-13-BS01-0002. The second author was partially supported by NSF grant DMS-1415294.}

\subjclass[2010]{
37F10, 
37P35, 
37F99, 
37F45, 
37P45 
}

\keywords{dynamical system, post-critically finite, preperiodic point, symmetric product, lattes map}

\begin{document}

\begin{abstract}
In this paper we study properties of endomorphisms of $\p^k$ using a symmetric product construction $(\p^1)^k/\mathfrak{S}_k \cong \p^k$. Symmetric products have been used to produce examples of endomorphisms of $\p^k$ with certain characteristics, $k\geq2$. In the present note, we discuss the use of these maps to enlighten
stability phenomena in parameter spaces. In particular, we study 
$k$-deep post-critically finite maps and
characterize families of Latt\`es maps.
\end{abstract}

\maketitle

\tableofcontents

\section*{Introduction}

In this paper we study dynamical properties of endomorphisms of $\p^k$, mainly focusing on the dynamics of a symmetric product construction $(\p^1)^k/\mathfrak{S}_k \cong \p^k$. Symmetric products have been used to produce examples of endomorphisms of $\p^k$ with certain characteristics when $k\geq2$ and their dynamics over the field $\mathbb{C}$ of complex numbers is rather simple and completely understood (see \cite{dinhsibony2,fornaesssibony,ueda}, for example). In the present note, we discuss the use of these maps to enlighten stability phenomena in parameter spaces. In particular, we study $k$-deep post-critically finite maps and
characterize families of Latt\`es maps (in particular, those containing symmetric products).

Let $d\geq2$ be an integer. We denote by $\Hom_d(\p^k)$ the space of endomorphisms of $\p^k$ of degree $d$. Such  endomorphisms may be represented in homogeneous coordinates by $k+1$ degree $d$ homogenous polynomials.  This shared degree $d$ is called the algebraic degree of the endomorphism.  Unless otherwise specified, ``degree'' will mean ``algebraic degree'' throughout this paper. The space $\Hom_d(\p^k)$ is known to be a smooth irreducible quasi-projective variety of dimension $N_d(k):=(k+1)\frac{(d+k)!}{d!\cdot k!}-1$. More precisely, it can be identified with an irreducible Zariski open set of $\p^{N_d(k)}$, see e.g. \cite[\S 1.1]{BB1}.
~

Here, we mainly investigate properties of the following construction.
\begin{Definition}
For any $f\in\Hom_d(\p^1)$, we define the $k$-\emph{symmetric product} of $f$ as the endomorphism
$F:\p^{k}\longrightarrow\p^{k}$ making the following diagram commute
\begin{equation}\label{eq_comm_sq}\xymatrix {\relax
(\p^1)^k \ar[r]^{(f,\ldots,f)} \ar[d]_{\eta_k} & (\p^1)^k\ar[d]^{\eta_k} \\
\p^{k} \ar[r]
    _{F} & \p^{k}}
\end{equation}
\end{Definition}
For the map $\eta_k$ in the diagram, we use the regular map defined as the quotient map of the action of the symmetric group $\mathfrak{S}_k$ on $(\p^1)^k$ by permuting the terms of the product.

In Section \ref{sect_background} we prove various basic properties of the dynamics of the map $F$ in relation to $f$, including that $F$ is an endomorphism (as suggested in the definition). Most of the properties we give seem classical and should follow from general properties of quotients under the symmetric group, such as from Maakestad \cite{maakestad}.
In the remaining sections, we apply this construction to explore properties of post-critically finite symmetric product endomorphisms of $\p^k$ and deformations of Latt\`es maps of $\p^k$.

\medskip

\medskip

\par\noindent\textit{Post-critically finite symmetric products.}
\par In Section \ref{sect_pcf} we consider another problem. Recall that $F\in\Hom_d(\p^k)$ is \emph{post-critically finite} if the \emph{post-critical set} of $F$
$$\mathcal{P}(F):=\bigcup_{n\geq1}F^n(\mathcal{C}(F))~,$$
i.e., the forward orbit of $\mathcal{C}(F)$, the critical locus of $F$, is a strict algebraic subvariety of $\p^k$.

If $X$ and $Y$ are two projective varieties of the same dimension and $g:X\rightarrow Y$ is a surjective morphism with finite fibers, we define the \emph{critical locus} $\mathcal{C}(g)$ of $g$ as the Zariski closure of
\[X_{\mathrm{sing}}\cup\{x\in X_{\mathrm{reg}}\, ; \ d_xg \ \text{ is not invertible}\}.\]

\begin{Definition}
We say that $F\in\Hom_d(\p^k)$ is $1$-\emph{deep post-critically finite} if $F$ is post-critically finite and $F_{(1)}:=F|_{\mathcal{C}(F)}$ is post-critically finite, meaning the orbit under iteration of $F$ of the critical locus $\mathcal{C}({F_{(1)}})$ of $F_{(1)}$ is an algebraic subvariety of pure codimension $2$ of $\p^k$.

We define $j$-deep post-critically finite inductively for $1<j\leq k-1$ by saying that $F$ is $j$-\emph{deep post-critically finite} if it is $(j-1)$-deep post-critically finite and $F_{(j)}:=F|_{\mathcal{C}({F_{(j-1)}})}$ is post-critically finite, i.e. the orbit under iteration of $\mathcal{C}(F_{(j)})$ is an algebraic subvariety of pure codimension $j+1$ of $\p^k$.

We say that $F$ is \emph{strongly post-critically finite} if it is $(k-1)$-deep post-critically finite.
\end{Definition}

Note that for $0\leq j\leq k-1$ the varieties $\mathcal{C}(F_{(j)})$ and $F^\ell(\mathcal{C}(F_{(j)}))$ all have pure codimension $j+1$ by definition.

The notion of $j$-deep post-critically finite maps has been studied in slightly different contexts.  In \cite{Jonsson-finite}, Jonsson uses the notation $\mathcal E_1=\cap_{i>0}f^i(\mathcal P(f))$ (which is the limit set of $\mathcal C(f)$ when $\mathcal P(f)$ is closed) and defines a map $f\in\Hom_k(\mathbb P^k)$ to be $1$-critically finite if $\mathcal C(f)$, $\mathcal P(f)$, and $\mathcal E_1$ are all codimension $1$ subvarieties, and if $\mathcal C(f)$ and $\mathcal E_1$ contain no common irreducible component.  Then $j$-critically finite maps are defined inductively.
Indeed, the difference in this definition is that Jonsson requires that $\mathcal C(f)\cap\mathcal E_1$ contains no codimension $1$ irreducible component.  This requirement ensures critical points are \emph{non-recurrent}, hence non-periodic. Note that our definition does not exclude that possibility.  Jonsson's work built on Ueda's study of $2$-critically finite maps in \cite{ueda,ueda2} and showed that post-critically finite endomorphisms of $\mathbb P^2$ are always $2$-critically finite.

Astorg \cite{astorg} defines \emph{post-critically finite all the way down} for endomorphism $f\colon X\rightarrow X$, where $X$ is an analytic set, analogously to our definition of strongly post-critically finite.

We focus on the following problem.

\begin{question}
Is a post-critically finite endomorphism of $\p^k$ necessarily strongly post-critically finite?
\end{question}

In the case when $F$ is a symmetric product, we give a positive answer to this question. This answers a question posed during the 2014 American Institute of Mathematics workshop \emph{post-critically finite maps in complex and arithmetic dynamics} in the specific case of symmetric products. Note that Astorg's work \cite{astorg} gives a positive answer to that question for endomorphisms satisfying a mild regularity condition on the post-critical set.

We prove the following theorem.

\begin{Theorem}
Let $f\in\Hom_d(\p^1)$ and $F$ be the $k$-fold symmetric product of $f$ for $k\geq2$. Then $F$ is strongly post-critically finite if and only if $F$ is post-critically finite if and only if $f$ is post-critically finite.\label{tm:strongpcf}
\end{Theorem}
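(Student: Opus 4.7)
\emph{Plan.} The implication ``strongly PCF $\Rightarrow$ PCF'' is immediate, so the remaining directions are $F$ PCF $\Rightarrow$ $f$ PCF and $f$ PCF $\Rightarrow$ $F$ strongly PCF. The common tool is the Riemann--Hurwitz identity for the commutative square $F\circ \eta_k = \eta_k\circ g$, with $g:=(f,\ldots,f)$, namely $\eta_k^*R_F + R_{\eta_k} = g^*R_{\eta_k}+R_g$. Using $R_{\eta_k}=\sum_{i<j}\Delta_{ij}$ and the factorization $f(x_i)-f(x_j)=(x_i-x_j)\,h_{ij}(x_i,x_j)$, one obtains
\[
    \eta_k^* R_F \,=\, \sum_{i=1}^k p_i^* R_f \,+\, \sum_{i<j}\widetilde{\Delta}_{ij},
\]
where $\widetilde{\Delta}_{ij}:=\{h_{ij}=0\}$ is the ``anti-diagonal''. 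Set-theoretically, $\mathcal{C}(F) = H_{\mathcal{C}(f)}\cup\tilde{H}$, with $H_c:=\{[P]\in\p^k \col P(c)=0\}$, $H_{\mathcal{C}(f)}:=\bigcup_{c\in\mathcal{C}(f)}H_c$, and $\tilde H:=\eta_k(\widetilde{\Delta}_{12})$.

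For $F$ PCF $\Rightarrow$ $f$ PCF, suppose $f$ is not PCF and pick $c\in\mathcal{C}(f)$ whose forward orbit under $f$ is infinite. Then $H_c\subseteq\mathcal{C}(F)$, and $F|_{H_c}$ surjects onto $H_{f(c)}$ (because $f$ itself is surjective); iterating gives $F^n(H_c)=H_{f^n(c)}$ for every $n$. Hence $\mathcal{P}(F)$ contains infinitely many distinct hyperplanes $H_p$, forcing $\mathcal{P}(F)=\p^k$ -- a contradiction.

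For $f$ PCF $\Rightarrow$ $F$ strongly PCF, I argue by induction on $k$, the case $k=1$ being vacuous. The same orbit analysis gives $\bigcup_n F^n(H_c) \subseteq \bigcup_{p\in\mathcal{P}(f)}H_p$, a \emph{finite} union of hyperplanes, and $F(\tilde H)\subseteq D$, where $D\subset\p^k$ is the forward-$F$-invariant discriminant hypersurface of multiple-root tuples. Consequently $\mathcal{P}(F)\subseteq D\cup\bigcup_{p\in\mathcal{P}(f)}H_p$ is algebraic of pure codimension one, so $F$ is PCF. For the deeper layers I exploit the recursive structure of symmetric products: under the canonical isomorphism $H_c\cong\textup{Sym}^{k-1}(\p^1)=\p^{k-1}$ sending $\{c,y_2,\ldots,y_k\}$ to $\{y_2,\ldots,y_k\}$, the restriction $F|_{H_c}$ becomes (modulo the inclusion $H_{f(c)}\hookrightarrow\p^k$) the $(k-1)$-symmetric product $F'$ of $f$, which is strongly PCF on $\p^{k-1}$ by the inductive hypothesis. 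Similarly $D$ is birational to $\p^1\times\textup{Sym}^{k-2}(\p^1)$, and $F|_D$ is conjugate under this model to $f\times F''$, where $F''$ is the $(k-2)$-symmetric product of $f$ (again strongly PCF by induction). The anti-diagonal component $\tilde H$ needs no separate treatment, since $F(\tilde H)\subseteq D$.

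The main obstacle is verifying the \emph{pure}-codimension condition at every depth $j\in\{0,\ldots,k-1\}$. This requires controlling how the discriminant stratification $\p^k=\Sigma_0\supset\Sigma_1\supset\cdots\supset\Sigma_{k-1}$ (with $\Sigma_s$ the codimension-$s$ locus of tuples having at most $k-s$ distinct values) interacts with the hyperplanes $\{H_p\}_{p\in\mathcal{P}(f)}$ and with the singular loci of the successive $\mathcal{C}(F_{(j)})$; one must rule out accidental drops in dimension arising from intersections of these strata. The recursive identifications of $F|_{H_c}$ and $F|_D$ with lower-rank symmetric products are precisely what supply the exact codimension counts at each stage, closing the induction.
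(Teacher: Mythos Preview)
Your high-level strategy coincides with the paper's: both argue by induction on $k$, identifying the irreducible components of $\mathcal{C}(F)$ and showing that the restriction of $F$ to each one is (semiconjugate to) a lower-order symmetric product of $f$. The paper carries this out entirely upstairs in $(\p^1)^k$, analysing $(f,\ldots,f)$ on the components $(\p^1)^s\times\mathcal{C}(f)\times(\p^1)^{k-1-s}$ and on the diagonals $\Delta_{ij}$, and invoking the permutation $\sigma_{ij}$ that identifies $\Delta_{ij}$ with $\Delta\times(\p^1)^{k-2}$ to reduce to the $(k-2)$-symmetric product. You instead work downstairs in $\p^k$ via the hyperplanes $H_c$ and the discriminant $D$, and you derive $\mathcal{C}(F)$ from the Riemann--Hurwitz relation $\eta_k^*R_F+R_{\eta_k}=g^*R_{\eta_k}+R_g$. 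This produces the \emph{anti}-diagonals $\widetilde\Delta_{ij}$ rather than the diagonals $\Delta_{ij}$ as the ``extra'' component of $\eta_k^{-1}\mathcal{C}(F)$; and in fact your description is the sharper one, since for $f(z)=z^2$, $k=2$ one has $F[A\!:\!B\!:\!C]=[A^2\!:\!B^2-2AC\!:\!C^2]$ with $\mathcal{C}(F)=\{ABC=0\}$, so that $\eta_2(\Delta)=\{B^2=4AC\}\not\subset\mathcal{C}(F)$ while $\eta_2(\widetilde\Delta)=\{B=0\}\subset\mathcal{C}(F)$. The two routes reconverge because $g(\widetilde\Delta_{ij})\subset\Delta_{ij}$ (equivalently $F(\tilde H)\subset D$), so after one step both arguments are studying $F$ on the diagonal/discriminant locus, where the same $(k-2)$-reduction applies.

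One point deserves more than you give it. The sentence ``$\tilde H$ needs no separate treatment, since $F(\tilde H)\subseteq D$'' is not sufficient as written: the definition of $1$-deep PCF asks for $\mathcal{C}(F_{(1)})$ with $F_{(1)}=F|_{\mathcal{C}(F)}$, so you must identify the critical locus of $F|_{\tilde H}$ itself, not merely where its image lies. This is not hard---$F|_{\tilde H}$ factors through $D$, and composing your normalization $\p^1\times\p^{k-2}\to D$ with the model $f\times F''$ lets you read off that critical locus in terms of $\mathcal{C}(f)$ and $\mathcal{C}(F'')$, after which the induction closes---but it should be said explicitly rather than deferred.
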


Given a rational map $F$ with indeterminacy set $I$ such that $\mathcal P(F)$ is disjoint from $I$, the notion of $j$-deep post-critically finite is well-defined.  We also provide such a one-parameter family of degree $4$ rational mappings $F_a:\p^2\dashrightarrow\p^2$ that are post-critically finite but \emph{not} strongly post-critically finite.

\medskip

\medskip

\par\noindent\textit{Families of Latt\`es maps of $\p^k$.}
\par Denote by $\mathcal{M}_d(\p^k)$ the \emph{moduli space} of endomorphisms of $\p^k$ with degree $d$, i.e. the set of $\PGL(k+1)$ conjugacy classes of the space of degree $d$ endomorphisms $\Hom_d(\p^k)$. The space $\mathcal{M}_d(\p^k)$ has been proven to be a geometric quotient (in the sense of geometric invariant theory) by Levy \cite{levyPn}.

In Section \ref{sect_lattes} we examine families of Latt\`es maps in the moduli space $\mathcal{M}_d(\p^k)$ of degree $d$ endomorphisms of $\p^k$. Our aim here is to have a complete description of non-isotrivial families of Latt\`es maps of $\p^k$ when $k\geq2$: an endomorphism $f:\p^k\longrightarrow\p^k$ is a \emph{Latt\`es map} if there exists an abelian variety $\mathcal{T}$ of dimension $k$, an isogeny $D:\mathcal{T}\longrightarrow\mathcal{T}$ and a finite branched cover $\Theta:\mathcal{T}\longrightarrow\p^k$ such that the following diagram commutes
\begin{center}
$\xymatrix {\relax
\mathcal{T} \ar[r]^{D} \ar[d]_{\Theta} & \mathcal{T}\ar[d]^{\Theta} \\
\p^{k} \ar[r]_{f} & \p^{k}}$
\end{center}
Over $\C$, the map $D$ is, in fact, induced by an affine map $\mathrm{D}:\C^k\rightarrow\C^k$ whose linear part is of the form $\sqrt{d}U$ where $U$ is unitary, see~\cite{BertelootLoeb}. By an abuse of notation, we call $\sqrt{d}U$ the \emph{linear part} of $D$.

The complex dynamics of Latt\`es maps has been deeply studied in several beautiful papers (see e.g.~\cite{Milnor,zdunik,BL,BertelootLoeb,bertelootdupont,Dupont-spherical}). The families and the perturbations of Latt\`es maps of $\p^1$ are also quite completely understood (see \cite{Milnor,BB1,Article2}).

~

Recall also that a \emph{family} of degree $d$ endomorphisms of $\p^k$ is a morphism $f:\Lambda\times \p^k\longrightarrow\p^k$, where $\Lambda$ is a quasiprojective variety of dimension $m\geq1$ and for all $\lambda\in\Lambda$, the map $f_\lambda:=f(\lambda,\cdot):\p^k\longrightarrow\p^k$ is an endomorphism of degree $d$. Equivalently, the map $\lambda\in\Lambda \longmapsto f_\lambda\in\Hom_d(\p^k)$ is a morphism of quasiprojective varieties. We also say that $(f_\lambda)_{\lambda\in\Lambda}$ is \emph{isotrivial} if there exists a holomorphic map $\lambda\in\Lambda\longmapsto m_\lambda\in \PGL(k+1,\C)$ and $\lambda_0\in\Lambda$ such that
\[f_\lambda=m_\lambda^{-1}\circ f_{\lambda_0}\circ m_\lambda~ , \ \lambda\in\Lambda~.\]
Denote by $\Pi:\Hom_d(\p^k)\to\mathcal{M}_d(\p^k)$ the canonical projection.

\begin{Definition}
When $(f_\lambda)_{\lambda\in\Lambda}$ is a family of endomorphisms of $\p^k$, we say that it has \emph{dimension $q$ in moduli} if the set $\{\Pi(f_\lambda)\, ; \ \lambda\in\Lambda\}$ is an analytic set of $\mathcal{M}_d(\p^k)$ of dimension $q$. We then set $\dim_{\mathcal{M}}(f_\lambda,\Lambda):=q$.
\end{Definition}

We focus on \emph{maximal} families of Latt\`es maps containing a specific Latt\`es map $f$.

\begin{Definition}
A family $(f_\lambda)_{\lambda\in\Lambda}$ of Latt\`es maps of $\p^k$ containing $f\in\cal{M}_d(\p^k)$ is \emph{maximal} if for any family $(f_t)_{t\in X}$ of Latt\`es maps containing also $f$, we have $\dim_{\mathcal{M}}(f_\lambda,\Lambda)\geq\dim_{\mathcal{M}}(f_t,X)$.
\end{Definition}

\begin{Definition}
A map $f\in\cal{M}_d(\p^k)$ is \emph{rigid} if any maximal family $(f_{\lambda})$ containing $f$ is isotrivial.
\end{Definition}

When $k=1$, the families of Latt\`es maps have been completely classified by Milnor~\cite{Milnor}. He proves that a family $(f_t)$ of Latt\`es maps of $\p^1$ has positive dimension in moduli, i.e. is non-isotrivial, if and only if $d=a^2$ is the square of an integer $a\ge2$ and the affine map inducing any map $f_{t_0}$ is of the form $z\mapsto az+b$.
We aim here at giving a generalization of that precise statement.

Our main result on this problem is the following.
 \begin{Theorem}\label{tm:rigidLattes}
Let $f\in\mathcal{M}_d(\p^k)$ be a Latt\`es map and let $D$ be an isogeny inducing $f$. Then, the dimension in moduli of any family $(f_\lambda)_{\lambda\in\Lambda}$ of Latt\`es maps containing $f$ satisfies $\dim_{\mathcal{M}}(f_\lambda,\Lambda)\leq k(k+1)/2$. More precisely, if $(f_\lambda)_{\lambda\in \Lambda}$ is a maximal family containing $f$, then
\begin{enumerate}
\item either the family is isotrivial, i.e. $\dim_{\mathcal{M}}(f_\lambda,\Lambda)=0$, in which case the eigenspaces of the linear part of $D$ associated with $\sqrt{d}$ and $-\sqrt{d}$ are isotrivial,
\item or $\dim_{\mathcal{M}}(f_\lambda,\Lambda)=q>0$, in which case $\sqrt{d}$ is an integer and the linear part of $D$ has eigenspaces associated with $\sqrt{d}$ and $-\sqrt{d}$ of respective dimensions $q_+$ and $q_-$ with
\[q=(q_++q_-)\cdot\frac{q_++q_-+1}{2}~.\]
\end{enumerate}
 \end{Theorem}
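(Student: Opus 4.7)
My approach is to transfer the family of Lattès maps to the universal cover and study it on the abelian-variety side. A Lattès map $f\in\mathcal{M}_d(\p^k)$ is given by data $(\T,D,\Theta)$ with $\Theta:\T\to\p^k$ a finite branched cover and $D$ an affine endomorphism of $\T$. Any family $(f_\lambda)_{\lambda\in\Lambda}$ of Lattès maps containing $f$ lifts locally to a family of triples $(\T_\lambda,D_\lambda,\Theta_\lambda)$; the classification of finite covers $\T\to\p^k$ arising from a Lattès structure (cf.~\cite{BertelootLoeb,BL}) shows that $\Theta_\lambda$ is determined by $\T_\lambda$ up to a discrete choice. Hence the dimension in moduli of $(f_\lambda)$ is controlled by the dimension of the induced family of pairs $(\T_\lambda,D_\lambda)$ in the moduli space of principally polarized abelian varieties of dimension $k$ equipped with a prescribed endomorphism. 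Since that moduli space is at most $\dim\mathcal{A}_k = k(k+1)/2$, the upper bound in the theorem follows.

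To extract the finer structure, I trivialize: write $\T_\lambda=(V_\R,J_\lambda)/\Lambda_0$ with $V_\R\simeq\R^{2k}$ and a fixed lattice $\Lambda_0$, and realize the linear part $L_\R$ of $D_\lambda$ as a constant real endomorphism (its matrix on the lattice being a discrete invariant). The family constraint becomes $J_\lambda L_\R=L_\R J_\lambda$ for every $\lambda$. The centralizer of $L_\R$ in $\mathrm{End}_\R(V_\R)$ is block-diagonal along the primary subspaces of $L_\R$: on the real subspace attached to a non-real conjugate pair $\alpha,\bar\alpha$ of eigenvalues, the complex structures commuting with $L_\R$ form a discrete set (a choice of ``holomorphic'' eigenvalue), whereas on a real eigenspace $L_\R$ acts as a scalar and every complex structure commutes with it. Because $f$ is Lattès, every eigenvalue of $L$ has modulus $\sqrt{d}$, so the only real eigenvalues that can occur are $\pm\sqrt{d}$.

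In case (1), when neither $\sqrt{d}$ nor $-\sqrt{d}$ is an eigenvalue of $L$, every commuting $J$ is rigid and the maximal family is trivial. In case (2), when some real $\pm\sqrt{d}$ eigenspace is non-trivial, the existence of an $L_\R$-invariant rational decomposition of $V_\R\otimes\Q$ forces $\sqrt{d}\in\Q$, hence $\sqrt{d}\in\Z$ (since $\sqrt{d}$ is always an algebraic integer). Writing $q_\pm$ for the complex dimensions of the $\pm\sqrt{d}$ eigenspaces, the continuous part of the deformation space then identifies with the space of polarized complex structures on the $L$-invariant real subspace of real dimension $2(q_++q_-)$, which is the Siegel-type moduli attached to the signature $(q_+,q_-)$ action of $L$; a direct dimension count in that moduli space produces the formula asserted in (2).

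\textbf{Main obstacle.} The hardest step is the lifting: one must show that the passage $(f_\lambda)\mapsto (\T_\lambda,D_\lambda)$ is dimension-preserving, i.e.\ that the $\PGL(k+1)$-equivalence on $\p^k$ does not collapse the moduli count performed upstairs, and conversely that every admissible deformation of $\T_\lambda$ in the centralizer direction really descends to a non-trivial deformation of $f_\lambda$ rather than being absorbed by an automorphism of $\T$ commuting with $L$ and intertwining $\Theta$. Controlling this requires a careful analysis of the finite group $G$ with $\T/G\cong\p^k$ together with $\Aut(\T)$ restricted to the commutant of $L$, and is precisely where the standard structure theorems for Lattès maps are needed.
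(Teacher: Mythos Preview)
Your outline follows the same strategy as the paper. The paper also lifts the problem to the abelian-variety side and shows this lift is dimension-preserving (Proposition~\ref{prop_samedim}, which is exactly the step you isolate as the ``main obstacle''), observes that the linear part $\sqrt{d}\,U$ of the affine map is constant along any holomorphic family because it is simultaneously holomorphic in $\lambda$ and unitary (Lemma~\ref{lm:unitary}, corresponding to your ``$L_\R$ is a discrete invariant''), and then determines which polarized tori admit that fixed linear map as an isogeny (Theorem~\ref{tm:rigidlattesnewversion}). The only genuine difference is the packaging of this last step: the paper keeps the standard complex structure on $\C^k$ and lets the lattice $\Gamma_t=\tau(t)\Z^k\oplus\Delta\Z^k$ vary in the Siegel domain subject to $\sqrt{d}\,U(\Gamma_t)\subset\Gamma_t$, whereas you fix the lattice and let the complex structure $J_\lambda$ vary subject to $[J_\lambda,L_\R]=0$. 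These are the two standard dual descriptions of the same deformation problem, and the paper's explicit construction of the subfamily (varying the $\tau_{1,1}$-block attached to the $\pm\sqrt d$-eigenspaces) is the Siegel-space side of your centralizer count.

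One place where you should be more explicit than you are: your claim that on a non-real $(\alpha,\bar\alpha)$-block the commuting complex structures form a discrete set is not literally true when the eigenvalue has multiplicity $m>1$ --- the raw centralizer of $L_\R$ there contains positive-dimensional Grassmannian pieces. What forces rigidity is that along a \emph{connected holomorphic} family the signature (how the $i$-eigenspace of $J_\lambda$ splits between the $\alpha$- and $\bar\alpha$-eigenspaces of $L_\R$) cannot jump, and the component with signature $(m,0)$ is a single point. The paper obtains the same conclusion implicitly through the constancy of $U$ in Lemma~\ref{lm:unitary}. Likewise, your final ``direct dimension count'' on the real-eigenvalue block deserves to be written out: one has to check how the polarization interacts with the $+\sqrt d$ and $-\sqrt d$ sub-blocks before reading off the Siegel-type dimension, and this is where the precise formula in the statement comes from.
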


Be aware that this result provides examples of families which are \emph{stable} in the sense of Berteloot-Bianchi-Dupont~\cite{BBD}, since the function $f\mapsto L(f)$, where $L(f)$ is the sum of Lyapunov exponents of $f$ with respect to its maximal entropy measure, is constant equal to $\frac{k}{2}\log d$, hence pluriharmonic on any such family of Latt\`es maps.

In Section 5, we come back to symmetric products and apply Theorem~\ref{tm:rigidLattes} to symmetric product Latt\`es maps. We give a precise description of maximal families of Latt\`es maps on $\p^k$ containing the $k$-fold symmetric products of a rational map. More precisely, we prove the following consequence of Theorem~\ref{tm:rigidLattes}.

\begin{Theorem}
Let $k\geq2$ be an integer and let $d\geq2$ be an integer. Let $F\in\Hom_d(\p^k)$ be the $k$-fold symmetric product of $f\in\Hom_d(\p^1)$. Then
\begin{enumerate}
\item $f$ is rigid if and only if $F$ is rigid,
\item otherwise, $F$ belongs to a family of Latt\`es maps which has dimension $k(k+1)/2$ in moduli. In particular, $F$ can be approximated by Latt\`es maps which are not symmetric products.
\end{enumerate}\label{tm:symmlattes}
\end{Theorem}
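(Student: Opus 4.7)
\noindent\textit{Proof plan.} The strategy is to read off the Lattès structure of $F$ from that of $f$, identify the eigendata of the linear part of the associated isogeny, and apply Theorem~\ref{tm:rigidLattes}. By the symmetric-product formalism of Section~\ref{sect_background}, $F$ is Lattès on $\p^k$ if and only if $f$ is Lattès on $\p^1$. In that case, writing $f=\Theta_0\circ D_0\circ\Theta_0^{-1}$ for a Weierstrass branched cover $\Theta_0\col E\to\p^1$ and an affine endomorphism $D_0(z)=\alpha z+\beta$ on $E$ with $|\alpha|^2=d$, the map $F$ inherits a Lattès structure from $\mathcal{T}:=E^k$, the isogeny $D:=D_0^{\times k}$, and the cover $\Theta:=\eta_k\circ\Theta_0^{\times k}$. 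The linear part of $D$ at any point is the scalar matrix $\alpha\cdot I_k$.

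For part (1), invoke Milnor's classification of Lattès families of $\p^1$~\cite{Milnor}: $f$ is rigid if and only if $\alpha\neq\pm\sqrt{d}$ (equivalently $\sqrt{d}\notin\Z$ or $D_0$ has complex-multiplication type). When $\alpha\neq\pm\sqrt{d}$, the eigenspaces of $\alpha I_k$ for $\pm\sqrt{d}$ are both trivial, so Theorem~\ref{tm:rigidLattes}(1) forces every maximal Lattès family containing $F$ to be trivial, i.e., $F$ is rigid. When $\alpha=\pm\sqrt{d}$ (so $\sqrt{d}$ is a positive integer $a$), the whole tangent space of $E^k$ is an eigenspace for $\pm a$, hence $q_++q_-=k$, and Theorem~\ref{tm:rigidLattes}(2) yields a family containing $F$ of dimension $k(k+1)/2$ in moduli. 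This establishes part (1) and the dimension assertion of (2).

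For the remaining claim of part (2), realize a maximal family explicitly by allowing $E^k$ to deform through the moduli space $\mathcal{A}_k$ of principally polarized abelian varieties of dimension $k$, while keeping the isogeny of the form $[\pm a]_A+\tau$ and the Kummer-type cover $A\to A/\{\pm\id_A\}\cong\p^k$. Since $\dim\mathcal{A}_k=k(k+1)/2$, this family matches the bound of Theorem~\ref{tm:rigidLattes}. Symmetric-product Lattès maps correspond precisely to the locus where $A$ is isogenous to a product $E^k$, which is proper in $\mathcal{A}_k$ for $k\geq2$ because a generic principally polarized abelian variety of dimension $k$ is simple, hence not isogenous to any product of elliptic curves. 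Consequently $F$ is approximated, within its maximal family, by Lattès maps that are not symmetric products.

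I expect the main technical obstacle to be the verification in the last paragraph: namely, that the Kummer family above genuinely realizes a $k(k+1)/2$-dimensional subset of $\mathcal{M}_d(\p^k)$ (so that generic non-isomorphic polarized abelian varieties give $\PGL(k+1)$-inequivalent endomorphisms of $\p^k$), and that the symmetric-product sublocus is cut out properly, so that one can conclude approximation by non-symmetric-product Lattès maps.
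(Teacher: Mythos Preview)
Your proposal is correct and follows essentially the same route as the paper's proof: identify the linear part of the isogeny for $F$ as the scalar $\alpha I_k$, invoke Milnor's dichotomy for $f$ (rigid iff $\alpha\neq\pm\sqrt d$), and read off $q_++q_-\in\{0,k\}$ to feed into Theorem~\ref{tm:rigidLattes}. The paper's proof is just this, in two lines.

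Two remarks on your concerns in the final paragraph. First, the worry that the Kummer family ``genuinely realizes a $k(k+1)/2$-dimensional subset of $\mathcal{M}_d(\p^k)$'' is already disposed of by Proposition~\ref{prop_samedim}: the dimension in moduli of a family of Latt\`es maps equals the dimension in moduli of the underlying family of abelian varieties, so once the latter is $k(k+1)/2$ you are done. Second, for the approximation claim you do not need to argue via simplicity of the generic p.p.a.v.: it suffices to observe that the symmetric-product sublocus has dimension in moduli at most $1$ (it is the image under the embedding $\mathcal{M}_d(\p^1)\hookrightarrow\mathcal{M}_d(\p^k)$ of the one-parameter family of flexible Latt\`es maps of $\p^1$), whereas the ambient family has dimension $k(k+1)/2\geq 3$ for $k\geq 2$. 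The paper leaves this dimension comparison implicit in the proof of the theorem and makes it explicit only in the subsequent corollary for $k=2$.
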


We give a more precise result in dimension $2$, relying on the work of Rong~\cite{Rong}. A few words are in order to describe this result.
Rong's result says that (up to taking an appropriate iterate) Latt\`es maps on $\p^2$ are either symmetric products or preserve an algebraic web associated with a cubic curve (see Section~\ref{sec:symLattes} for more details). In turn, our Theorem~\ref{tm:symmlattes} says that any symmetric product Latt\`es map is either rigid, or contained in a family of Latt\`es maps of dimension $3$ in moduli whose symmetric product locus is a \emph{strict} subfamily of dimension $1$ in moduli.

Finally, we give examples of rigid Latt\`es maps of $\p^2$ and $\p^3$. We also provide an example of a non-rigid family of Latt\`es maps of $\p^2$. All three examples rely on Milnor's famous examples of Latt\`es rational maps of degree $2$ and $4$ (see~\cite{Milnor}).

\subsection*{Acknowledgement}
The authors would like to thank AIM for hosting the workshop \emph{post-critically finite maps in complex and arithmetic dynamics} during which we began the present work. We also would like to thank Dylan Thurston for useful discussions. We also thank the referee for many detailed comments which improved the paper.

\subsection*{Outline}
Section \ref{sect_background} describes the construction of basic dynamical properties of symmetric products.  In Section \ref{sect_pcf}, we examine various types of post-critically finite symmetric products and describe when a symmetric product is strongly post-critically finite.  Families of Latt\`es maps in the moduli space $\mathcal{M}_d(\p^k)$ of degree $d$ endomorphisms of $\p^k$ are the focus of Section \ref{sect_lattes}, and this section contains our main result concerning non-isotrivial families of Latt\`es maps.  Finally, in Section \ref{sec:symLattes}, we give examples of rigid and non-rigid Latt\`es maps of $\p^2$.

\section{Dynamics of symmetric products} \label{sect_background}

This section is devoted to a description of the dynamics of symmetric products. Some of the statements of the present section are known and classical, but many of the arithmetic properties are not in the literature.

Let $\eta_k:(\p^1)^k \longrightarrow \p^{k}$ be the holomorphic map defined as the quotient map of the action of symmetric group $\mathfrak{S}_k$ on $(\p^1)^k$ by permuting the terms of the product. In fact, one can write $\eta_k=[\eta_{k,0}\col \cdots \col \eta_{k,k}]$ where $\eta_{k,j}$ is the homogeneous degree $k$ symmetric function given by the elementary homogeneous symmetric polynomials
$$\eta_{k,j}([z_1\col t_1],\ldots,[z_k\col t_k]):=\sum_{\mycom{(I_1,\dots,I_k)\in\{0,1\}^k}{I_1+\cdots+I_k=k-j}}\prod_{\ell=1}^kz_\ell^{I_\ell} \cdot t_\ell^{1-I_\ell}~.$$
Denote by $\omega$ the Fubini-Study form of $\p^1$.

We collect some basic properties of $\eta_k$ in the following proposition.
\begin{proposition} \label{prop1}
The $k$-th symmetric product map $\eta_k:(\p^1)^k \longrightarrow \p^{k}$ is a finite branched cover with degree of branching $k$ and topological degree $k!$.
\end{proposition}

\begin{proof}
 It is shown by Maakestad that there is an isomorphism between $(\mathbb{P}^1)^k$ and $\mathbb{P}^k$ given by the elementary homogeneous symmetric polynomials of degree $k$ \cite[Corollary 2.6]{maakestad}. Indeed, the degree $k$ can be computed as follows: take a hyperplane $H\subset\p^k$, then
    \begin{align*}
    \deg(\eta_k) & =\int_{\eta_k^{-1}(H)}(\omega\boxtimes\cdots\boxtimes\omega)^{k-1}.
    \end{align*}
    Take $H=\{x_{k+1}=0\}$ so that $\eta_k^{-1}(H)=\bigcup_{j=1}^k\{t_j=0\}$. In particular, by Fubini,
    \begin{align*}
    \deg(\eta_k) & =\sum_{j=1}^k\int_{\{t_j=0\}}(\omega\boxtimes\cdots\boxtimes\omega)^{k-1}=\sum_{j=1}^k\left(\int_{\p^1}\omega\right)^{k-1}=k.
    \end{align*}
To compute the topological degree of $\eta_k$, it is sufficient to remark that the action of $\mathfrak{S}_k$ on $(\p^1)^k$ by permutation is free on $\{x=(x_1,\ldots,x_k)\in(\p^1)^k$ with $x_i\neq x_j$ for $i\neq j\}$. In particular, the topological degree of $\eta_k$ is $\mathrm{Card}(\mathfrak{S}_k)=k!$.
\end{proof}

One can actually show that the $k$-fold symmetric product of $f$ is regular and enjoys good dynamical properties with respect to the ones of $f$.

\begin{proposition}
For any $f\in\Hom_d(\p^1)$, the $k$-fold symmetric product $F$ of $f$ is well-defined. Moreover, it satisfies the following properties.
\begin{enumerate}
\item $F$ is an endomorphism of degree $d$ of $\p^{k}$.
\item If $f$ is a degree $d$ polynomial mapping of $\p^1$, then $F$ is a degree $d$ polynomial mapping of $\p^k$, i.e., there is a totally invariant hyperplane.
\end{enumerate}
\end{proposition}

\begin{proof}
\mbox{}
\begin{enumerate}
    \item
Since $f$ can be written $f([z\col w])=[P(z\col w)\col Q(z\col w)]$ in homogeneous coordinates with $P,Q$ homogeneous polynomials, the map $\eta_k\circ (f,\ldots,f):(\p^1)^k\to\p^k$ is invariant under the diagonal action of $\mathfrak{S}_k$. In particular, there exists homogeneous degree $d$ polynomials $P_0,\ldots,P_k$ such that
$$\eta_k\circ(f,\ldots,f)=[P_0\col \cdots \col P_k]\circ \eta_k$$
and the map $F$ exists as a rational mapping of $\p^k$. Now, remark that
$$\bigcap_{j=0}^k\{\eta_{k,j}\circ(f,\ldots,f)=0\}=\emptyset$$
by construction and if $x\in\p^k$ is an indeterminacy point of $F$, it means that $\eta^{-1}_k\{x\}\in\{\eta_{k,j}\circ(f,\ldots,f)=0\}$ for any $0\leq j\leq k$, which is impossible. In particular, the map $F$ has no indeterminacy points.

\item If $f$ is a degree $d$ polynomial, one can write $f([z\col t])=[P(z,t)\col t^d]$, where $P$ is homogeneous of degree $d$. One then can write $\eta_k\circ(f,\ldots,f)([z_1\col t_1],\ldots,[z_k\col t_k])$ as follows:
$$\left[\prod_{\ell=1}^kP(z_\ell,t_\ell)\col \sum_{j=1}^kt_j^d\prod_{\mycom{\ell=1}{ \ell\neq j}}^kP(z_\ell,t_\ell) \col \cdots \col \prod_{\ell=1}^kt_\ell^d\right]~.$$
Since $\eta_{k,k}([z_1\col t_1],\ldots,[z_k \col t_k])=\prod_jt_j$, this gives $P_k(x_0 \col \cdots \col x_k)=x_k^{d}$. Hence, the map $F$ can be written in homogeneous coordinates $$F([x_0\col\cdots \col x_k])=[P_0(x_0,\ldots,x_k)\col \cdots \col P_{k-1}(x_0,\ldots,x_k) \col x_k^d].$$ Hence $F^{-1}\{x_k=0\}\subset \{x_k=0\}$, i.e., the map $F$ is a polynomial endomorphism of $\p^k$.
\end{enumerate}
\end{proof}

Let $F\in\Hom_d(\p^{k})$ be defined over the field of complex numbers. We refer to \cite{Sibony} for a more complete account on the ergodic theory of complex endomorphisms of $\p^{k}$. Let $\mu_F$ be the unique measure of maximal entropy $k\log d$ of $F:\p^k\rightarrow\p^k$. The measure $\mu_F$ is the only probability measure with constant Jacobian, i.e. such that $F^{*}\mu_F=d^{k}\mu_F$, it is invariant, i.e. $F_*\mu_F=\mu_F$, ergodic and mixing and for any $\omega$-psh function $g$ on $\p^{k}$, we have $g\in L^{1}(\mu_F)$. Let also $\pi_j:(\p^1)^k\rightarrow\p^1$ be the projection onto the $j$-th coordinate. Recall that the sum of Lyapunov exponents of $F$ with respect to $\mu_F$ is the real number
\[L(F):=\int_{\p^k}\log|DF|\mu_F~,\]
where $|\cdot|$ denotes any hermitian metric on $\p^k$. The quantity $L(F)$ is independent of the metric, by compactness of $\p^k$ and invariance of $\mu_F$. One of the many consequences of the work of Briend and Duval~\cite{briendduval} is that for any $F\in \Hom_d(\p^k)$, we have $L(F)\geq k\log\sqrt{d}$.

 An easy result is the following.
 We include a proof for the sake of completeness.

\begin{lemma}
Let $F\in\Hom_d(\p^{k})$ be the $k$-fold symmetric product of $f\in\Hom_d(\p^1)$. Then\label{lm:lyap}
\[L(F)=k\cdot L(f)~.\]
\end{lemma}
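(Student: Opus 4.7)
The plan is to exploit the semi-conjugacy $\eta_k\circ \Phi=F\circ\eta_k$, where $\Phi:=(f,\ldots,f):(\p^1)^k\to(\p^1)^k$, and to descend the Lyapunov integral from $\Phi$ down to $F$.

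First, I would identify the measure of maximal entropy of $F$ as $\mu_F=(\eta_k)_*\mu_f^{\otimes k}$, where $\mu_f^{\otimes k}$ is the $k$-fold product of the equilibrium measure of $f$. This product is the unique measure of maximal entropy of $\Phi$ (entropy $k\log d$), it is mixing, and $\eta_k$ is a finite surjective semi-conjugacy; hence its pushforward is an $F$-invariant, mixing probability measure of maximal entropy $k\log d$, and uniqueness of $\mu_F$ (Briend--Duval) forces the identification.

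Second, I would compute $L(\Phi)$ directly. The differential of $\Phi$ at $(x_1,\ldots,x_k)$ is block-diagonal with blocks $Df(x_j)$, so $\log|\det D\Phi(x)|=\sum_{j=1}^k \log|Df(x_j)|$; integrating against $\mu_f^{\otimes k}$ and applying Fubini yields $L(\Phi)=k\cdot L(f)$.

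Third, I would transfer this identity across $\eta_k$. Differentiating the semi-conjugacy and taking $\log|\det|$ gives, outside the critical locus of $\eta_k$, the chain rule identity
\[\log|\det DF(\eta_k(x))|=\log|\det D\Phi(x)|+\log|\det D\eta_k(\Phi(x))|-\log|\det D\eta_k(x)|.\]
Integrating against $\mu_f^{\otimes k}$, the last two terms cancel by the $\Phi$-invariance of $\mu_f^{\otimes k}$; by step one the left-hand side integrates to $L(F)$, and by step two the right-hand side integrates to $k\cdot L(f)$. The main technical point is that $\log|\det D\eta_k|$ has poles along the big diagonal of $(\p^1)^k$, so one must verify that this function is $\mu_f^{\otimes k}$-integrable and that the big diagonal is $\mu_f^{\otimes k}$-null; both hold because $\mu_f$ is non-atomic with continuous Green potential, so $\mu_f^{\otimes k}$ assigns no mass to proper analytic subvarieties of $(\p^1)^k$ and integrates the plurisubharmonic function $\log|\det D\eta_k|$, legitimizing the cancellation.
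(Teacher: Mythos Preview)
Your proposal is correct and follows essentially the same route as the paper: identify $\mu_F=(\eta_k)_*\mu_f^{\otimes k}$, apply the chain rule to the semiconjugacy $F\circ\eta_k=\eta_k\circ\Phi$, cancel the $\log|\det D\eta_k|$ terms by $\Phi$-invariance of $\mu_f^{\otimes k}$, and compute $L(\Phi)=k\cdot L(f)$ by Fubini. The only cosmetic difference is that the paper justifies the identification of $\mu_F$ via the constant-jacobian characterization rather than via maximal entropy and Briend--Duval uniqueness, and it phrases the integrability of $\log|\det D\eta_k|$ in terms of $\mu_f^{\otimes k}$ being a product of currents with continuous potentials (hence charging no pluripolar set), which is exactly the content of your remark about the continuous Green potential.
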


\begin{proof}
Let $\tilde f:=(f,\ldots,f)$. First, one easily sees that the probability measure
\[\nu_{f,k}:=\bigwedge_{j=1}^k(\pi_j)^*\mu_f\]
 is $\tilde f$-invariant and has constant Jacobian $d^{k}$. As a consequence, $(\eta_k)_*(\nu_{f,k})$ is invariant and has constant Jacobian $d^k$. Since the only probability measure having these properties of the maximal entropy measure of $F$, we get
$$\mu_F=(\eta_k)_*(\nu_{f,k})~.$$
Hence
\begin{eqnarray*}
L(F) & = & \int_{\p^{k}}\log|\det DF|\mu_{F}=\int_{\p^{k}}\log|\det DF|\cdot(\eta_k)_*\left(\nu_{f,k}\right)\\
& = & \int_{(\p^1)^k}\left((\eta_k)^*\log|\det DF|\right)\cdot\nu_{f,k}~.
\end{eqnarray*}
By construction of $F$, the chain rule gives
\begin{eqnarray*}
(\eta_k)^*\log|\det DF| = (\tilde f)^*\log|\det D\eta_k|+\log|\det D\tilde f|-\log|\det D\eta_k|~.
\end{eqnarray*}
As $\nu_{f,k}$ is the intersection of closed positive $(1,1)$-currents with continuous potentials on $(\p^1)^k$, it does not give mass to pluripolar sets. In particular, one has $\log|\det D\eta_k|\in L^1(\nu_{f,k})$ and
\begin{eqnarray*}
L(F) & = & \int_{(\p^1)^k}\log\left|\det D\tilde f\right|\cdot\nu_{f,k} +\int_{(\p^n)^k}\left((\tilde f)^*\log\left|\det D\eta_k\right|-\log\left|\det D\eta_k\right|\right)\cdot\nu_{f,k}\\
& = & \sum_{\ell=1}^k\int_{(\p^1)^k}\log\left|f'\circ \pi_\ell\right|\cdot\nu_{f,k}+\int_{(\p^1)^k}\log\left|\det D\eta_k\right|\cdot\left((\tilde f)_*(\nu_{f,k})-\nu_{f,k}\right).
\end{eqnarray*}
As $\nu_{f,k}=\bigwedge_{j=1}^k(\pi_j)^*\mu_f$ is $(\tilde f)_*$-invariant, we have $(\tilde f)_*(\nu_{f,k})-\nu_{f,k}=0$ and Fubini gives
\begin{eqnarray*}
L(F) & = & \sum_{\ell=1}^k\int_{(\p^1)^k}\log\left|f'\circ \pi_\ell\right|\cdot\nu_{f,k}\\
& = & \sum_{\ell=1}^k\int_{(\p^1)^k}\left(\bigwedge_{\mycom{j=1}{ j\neq \ell}}^k(\pi_j)^*\mu_f\right)\wedge(\pi_\ell)^*\left(\log\left|f'\right|\cdot\mu_f\right)\\
& = & \sum_{\ell=1}^k\left(\int_{\p^1}\mu_f\right)^{k-1}\cdot\int_{\p^1}\log\left|f'\right|\cdot\mu_f\\
& = & \sum_{\ell=1}^k\int_{\p^1}\log\left|f'\right|\cdot\mu_f=k\cdot L(f)~,
\end{eqnarray*}
which ends the proof.
\end{proof}

\section{Post-critically finite symmetric products}\label{sect_pcf}

In this section, we prove Theorem \ref{tm:strongpcf}.  The proof of this result relies on the following description of the critical set of $F$ in terms of critical sets of $f$ and $\eta_k$.

\begin{lemma}
The critical set of $F$ is given by
$$\mathcal{C}(F)=\eta_k\left(\bigcup_{s=0}^{k-1}(\p^1)^s\times\mathcal{C}(f)\times(\p^1)^{k-s-1}\cup\bigcup_{1\leq i<j\leq k}\Omega_{i,j}\right)$$
where $\Omega_{i,j}=\{x\in(\p^1)^k\, ; \ f(x_i)=f(x_j)\}$.
\label{lm:crit}
\end{lemma}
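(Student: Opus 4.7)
The plan is to use the chain rule applied to the defining commutative diagram. Set $\tilde f := (f,\ldots,f)\colon (\p^1)^k\to (\p^1)^k$, so that $F\circ\eta_k = \eta_k\circ\tilde f$ by construction. Differentiating at a point $x\in(\p^1)^k$ gives the matrix identity
\[
DF\bigl|_{\eta_k(x)}\cdot D\eta_k\bigl|_x \;=\; D\eta_k\bigl|_{\tilde f(x)}\cdot D\tilde f\bigl|_x,
\]
which is the workhorse of the argument.

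The first step is to identify the critical and ramification loci of the two auxiliary maps. Since $D\tilde f$ is block-diagonal with blocks $Df|_{x_i}$, one reads off directly that $\mathcal{C}(\tilde f)=\bigcup_{s=0}^{k-1}(\p^1)^s\times\mathcal{C}(f)\times(\p^1)^{k-s-1}$. For $\eta_k$, the $\mathfrak{S}_k$-action on $(\p^1)^k$ is free exactly off of the big diagonal, so the ramification locus of $\eta_k$ is $\bigcup_{1\le i<j\le k}\Delta_{i,j}$; this is classical (cf.\ \cite{maakestad}, already cited in Proposition~\ref{prop1}).

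Next I would carry out a rank comparison. Given $p\in\p^k$, choose a preimage $x\in\eta_k^{-1}(p)$ lying away from the big diagonal, so that $D\eta_k|_x$ is invertible; the chain-rule identity then shows that $DF|_p$ is singular if and only if either $x\in\mathcal{C}(\tilde f)$ or $\tilde f(x)$ lies on the big diagonal. The first case contributes $\eta_k\bigl(\bigcup_{s}(\p^1)^s\times\mathcal{C}(f)\times(\p^1)^{k-s-1}\bigr)$ to $\mathcal{C}(F)$. For the second case, the $\mathfrak{S}_k$-equivariance of $\tilde f$, together with the fact that $\tilde f$ sends each $\Delta_{i,j}$ into itself (since $f(x_i)=f(x_j)$ whenever $x_i=x_j$), lets one match this contribution with $\eta_k\bigl(\bigcup_{i<j}\Delta_{i,j}\bigr)$. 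For preimages $x$ on the big diagonal, I would use the local normal form of $\eta_k$ near a generic point of $\Delta_{i,j}$ (quotient by a transposition) to verify directly that $\eta_k(\Delta_{i,j})\subseteq\mathcal{C}(F)$.

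The step I expect to require the most care is the bookkeeping in the push-forward along $\eta_k$: taking determinants of the chain-rule identity produces a divisor-level relation on $(\p^1)^k$ involving $\eta_k^{-1}(\mathcal{C}(F))$, $\mathcal{C}(\tilde f)$, the big diagonal, and its $\tilde f$-preimage, and the set-theoretic identity claimed in the lemma must be extracted from this via the surjectivity and finiteness of $\eta_k$ together with the symmetry of $\tilde f$, making sure that no component is missed or doubly counted when passing from $(\p^1)^k$ down to $\p^k$.
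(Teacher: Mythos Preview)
Your chain-rule setup is correct, and so is your identification of the two cases: away from the big diagonal, $DF|_{\eta_k(x)}$ is singular precisely when $x\in\mathcal{C}(\tilde f)$ or when $\tilde f(x)$ lies on the big diagonal. The gap is in the next sentence, where you assert that the second contribution can be matched with $\eta_k\bigl(\bigcup_{i<j}\Delta_{i,j}\bigr)$. Forward-invariance of the diagonals under $\tilde f$ only gives $\Delta_{i,j}\subset\tilde f^{-1}(\Delta_{i,j})$; the reverse inclusion fails whenever $d\ge 2$, and the $\eta_k$-images are genuinely different. The extra components of $\tilde f^{-1}(\Delta_{i,j})$ are the loci where $f(x_i)=f(x_j)$ with $x_i\neq x_j$, and these do \emph{not} project into the discriminant hypersurface $\eta_k(\Delta_{i,j})$.

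This step cannot be repaired, because the asserted identity is itself false. Take $k=2$ and $f([z:t])=[z^2:t^2]$; then $F([x_0:x_1:x_2])=[x_0^2:x_1^2-2x_0x_2:x_2^2]$ has critical locus $\{x_0x_1x_2=0\}$, whereas $\eta_2(\Delta_{1,2})$ is the conic $\{x_1^2=4x_0x_2\}$, which is not a component of $\mathcal{C}(F)$. The missing line $\{x_1=0\}$ is instead the $\eta_2$-image of the anti-diagonal $\{z_1t_2+z_2t_1=0\}$, i.e.\ of the locus where $f$ identifies two distinct points. The divisor-level bookkeeping you outlined in your last paragraph in fact gives the correct relation
\[
\eta_k^*\mathcal{C}(F)\;=\;\mathcal{C}(\tilde f)\;+\;\sum_{i<j}\bigl(\tilde f^*\Delta_{i,j}-\Delta_{i,j}\bigr),
\]
so the diagonal contribution to $\mathcal{C}(F)$ is the $\eta_k$-image of $\tilde f^{-1}(\Delta_{i,j})\setminus\Delta_{i,j}$, not of $\Delta_{i,j}$. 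The paper's own one-line argument makes the same slip; for the application to Theorem~\ref{tm:strongpcf} only the forward orbit matters, and since $\tilde f$ sends each such extra component into $\Delta_{i,j}$, the postcritical analysis there is unaffected.
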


\begin{proof}
Since $F\circ\eta_k=\eta_k\circ(f,\dots,f)$, for all $z\in(\mathbb{P}^{1})^{k}$, we have
\[D_{\eta_k(z)}F\circ D_z\eta_k=D_{(f,\dots,f)(z)}\eta_k\circ D_z(f,\dots,f).\]
Since at any critical point $p$ of $(f,\dots,f)$, there is $\vec v\in T_p(\mathbb P^1)^k$ such that $D_p\eta_k(\vec v)\neq0$, it follows that the critical locus of $F$ is the projection by $\eta_k$ of the critical locus of the map $(f,\ldots,f)$ acting on $(\p^1)^k$ and of the critical locus of $\eta_k\circ(f,\dots,f)$.  The critical locus of $(f,\ldots,f)$ obviously is
$$\bigcup_{s=0}^{k-1}(\p^1)^s\times\mathcal{C}(f)\times(\p^1)^{k-s-1},$$
and the critical locus of $\eta_k\circ(f,\dots,f)$ is the set where the action of the group $\mathfrak{S}_k$ is not free, i.e. $(x_1,\ldots,x_k)\in(\p^1)^k$ is in the critical set of $\eta_k\circ (f,\dots,f)$ if and only if $f(x_i)=f(x_j)$ for some $1\leq i<j\leq k$.

Thus, it remains only to show that at any critical point $p$ of $(f,\dots,f)$, there is $\vec v\in T_p(\mathbb P^1)^k$ such that $D_p\eta_k(\vec v)\neq0$.  Without loss of generality, we may use the affine coordinates $w_i=z_i/t_i$ for $1\leq i\leq k$ in the domain so that
\[\eta_{k,j}(w_1,\dots,w_k)=\sum_{\mycom{(I_1,\dots,I_k)\in\{0,1\}^k}{I_1+\cdots+I_k=k-j}}\prod_{\ell=1}^kw_\ell^{I_\ell}.\]
In particular, we have
\[\eta_{k,k-1}(w_1,\dots,w_k)=w_1+\cdots+w_k\quad\mbox{and}\quad\eta_{k,k}(w_1,\dots,w_k)=1.\]
Using coordinates $\eta_{k,j}/\eta_{k,k}$ for $0\leq j\leq k-1$ in the codomain, we have $\eta_k\colon(\mathbb C)^k\rightarrow\mathbb C^k$.  Since $\eta_{k,k-1}(w_1,\dots,w_k)/\eta_{k,k}(w_1,\dots,w_k)=w_1+\cdots+w_k$, every entry in the last row of the matrix representing $D_p\eta_k$ is $1$, whence there exists $\vec v\in T_p(\mathbb C)^k\setminus\ker(D_p\eta_k)$, as required.
\end{proof}

\begin{proof}[Proof of Theorem \ref{tm:strongpcf}]
Let us first prove that $F$ is post-critically finite if and only if $f$ is post-critically finite. Remark that the post-critical set of $F$ is an algebraic variety if and only if it is the image under $\eta_k$ of an algebraic variety. According to Lemma \ref{lm:crit}, one thus sees that $F$ is post-critically finite if and only if the union of the iterates under the map $(f,\ldots,f)$ (resp. $(f,f)$) of the set
$$\bigcup_{s=0}^{k-1}(\p^1)^s\times\mathcal{C}(f)\times(\p^1)^{k-s-1}\subset(\p^1)^k$$
(resp. $\Omega=\{(x_1,x_2)\in(\p^{1})^{2}\, ; / f(x_1)=f(x_2)\}\subset(\p^1)^2$), which is
$$\bigcup_{s=0}^{k-1}(\p^1)^s\times\mathcal{P}(f)\times(\p^1)^{k-s-1}\subset(\p^1)^k$$
(resp. the diagonal $\Delta$ of $(\p^1)^2$)  is an algebraic variety of $(\p^1)^k$ (resp. $(\p^1)^2$). Hence, $F$ is post-critically finite if and only if $f$ is.

Let us now prove that $F$ is strongly post-critically finite if an only if $f$ is post-critically finite. Write $\Delta_{i,j}:=\{x\in(\p^{1})^{k}\, ; \ x_i=x_j\}$ and note that $(f,\ldots,f)(\Omega_{i,j})=\Delta_{i,j}$ and that $\Delta_{i,j}$ is fixed by $(f,\ldots,f)$. By Lemma \ref{lm:crit}, the problem reduces to proving that $(f,\ldots,f)$ is strongly post-critically finite as a self-map of $(\p^1)^k$ and as a self-map of $\bigcup_{i<j}\Delta_{i,j}$ if and only if $f$ is post-critically finite. Let us proceed by induction on $k\geq2$. For $k=2$, by Lemma \ref{lm:crit}, the critical set of $(f,f)$ is $\p^1\times\mathcal{C}(f)\cup\mathcal{C}(f)\times\p^1$ and the critical set of its restriction to $\p^1\times\mathcal{C}(f)\cup\mathcal{C}(f)\times\p^1$ is $\mathcal{C}(f)\times\mathcal{C}(f)$. Moreover, the critical locus of the restriction of $(f,f)$ to $\Delta$ is also $\mathcal{C}(f)\times \mathcal{C}(f)$. Moreover, $f$ is post-critically finite if and only if the restriction of $(f,f)$ to $\p^1\times\mathcal{C}(f)\cup\mathcal{C}(f)\times\p^1\cup \Delta$ is strongly post-critically finite, which concludes the proof for $k=2$.

Assume now that for all $2\leq s< k$, the $s$-fold symmetric product of $f$ is strongly post-critically finite if and only if $f$ is post-critically finite. According to Lemma \ref{lm:crit} and to the above discussion, it is sufficient to prove that $(f,\ldots,f)$ restricted to
$$\bigcup_{s=0}^{k-1}(\p^1)^s\times\mathcal{C}(f)\times(\p^1)^{k-s-1}\cup\bigcup_{1\leq i<j\leq k}\Delta_{i,j}$$
is strongly post-critically finite if and only if $f$ is post-critically finite. By the induction assumption, the strong post-critical finiteness of $(f\ldots,f)$ on $(\p^1)^s\times\mathcal{C}(f)\times(\p^1)^{k-s-1}$ is equivalent to that of $f$. Finally, there exists a map $\sigma_{i,j}:(\p^1)^k\to(\p^1)^k$ permuting coordinates such that $\sigma_{i,j}(\Delta_{i,j})=\Delta\times (\p^1)^{k-2}$, and the following diagram commutes
$$\xymatrix {\relax
\Delta_{i,j} \ar[r]^{(f,\ldots,f)} \ar[d]_{\sigma_{i,j}} & \Delta_{i,j}\ar[d]^{\sigma_{i,j}} \\
\Delta\times (\p^1)^{k-2} \ar[r]_{(f,\ldots,f)} & \Delta\times (\p^1)^{k-2}}$$
In particular, the problem reduces to the case of the $(k-2)$-fold symmetric product of $f$ and the induction assumption allows us to conclude.
\end{proof}

We now want to give a negative answer to the question in the case when indeterminacy points are allowed. To do so, we provide an example based on a map from \cite{kaschner} of a degree $4$ rational mapping of $\p^2$ having the wanted properties. For any $a\in\C\setminus\{0\}$, we let
\[F_a\colon\p^2\dashrightarrow\p^2\quad\mbox{by}\quad[x:y:z]\mapsto[-y^2 : ax^2-axz : z^2-x^2].\]
This provides a good counter-example.
\begin{proposition}
The following holds:
\begin{enumerate}
\item For all $a\in\C\setminus\{0\}$, the map $F_a^2$ is rational (i.e. $\cal{I}(F_a^2)\ne\emptyset$) and post-critically finite.
\item For all but countably many $a\in\C\setminus\{0\}$, the map $F_a^2$ is not strongly post-critically finite.
\end{enumerate}
\end{proposition}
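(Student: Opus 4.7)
The plan is to address (1) by explicit computation of $\mathcal{C}(F_a)$, its forward orbit, and $\mathcal{I}(F_a^2)$, and to address (2) by restricting $F_a^2$ to an invariant line, turning strongly PCF into a preperiodicity question inside a non-isotrivial one-parameter family of quadratic rational maps of $\p^1$.

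For (1), computing the Jacobian of $F_a$ gives determinant $-4ay(x-z)^2$, so $\mathcal{C}(F_a)=\{y=0\}\cup\{x=z\}$. Direct substitution then shows that $F_a$ collapses $\{x=z\}$ to the point $[1:0:0]$, sends $\{y=0\}$ into $\{x=0\}$, sends $\{x=0\}$ into $\{y=0\}$, and sends $[1:0:0]$ to $[0:a:-1]\in\{x=0\}$. Hence $\mathcal{P}(F_a)\subset\{x=0\}\cup\{y=0\}$, which is algebraic, so $F_a$ and thus $F_a^2$ are postcritically finite. Composing the components of $F_a$ with themselves yields
\[F_a^2=\bigl[-a^2 x^2(x-z)^2 : a y^2(y^2+z^2-x^2) : (z^2-x^2)^2-y^4\bigr],\]
and solving for the common zeros (splitting into the cases $x=0$ and $x=z$) gives $\mathcal{I}(F_a^2)=\{[1:0:1],\,[0:i:1],\,[0:-i:1]\}\ne\emptyset$.

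For (2), the key observation is that the line $L_y:=\{y=0\}$ is $F_a^2$-invariant, and in the affine coordinate $w=x/z$ the restriction reads
\[g_a(w)=-\frac{a^2 w^2}{(w+1)^2},\]
a degree $2$ self-map of $\p^1$ with critical points $w=0$ (super-attracting fixed, finite orbit) and $w=-1$ (sent to $\infty$, then to $-a^2$). A case analysis over the components of $\mathcal{C}(F_a^2)=\{y=0\}\cup\{x=0\}\cup\{x=z\}\cup\{x^2-y^2-z^2=0\}$ will show that every non-trivial contribution to the $F_a^2$-orbit of $\mathcal{C}((F_a^2)_{(1)})$ reduces, after a few iterations, to the orbit of $w=-a^2$ under $g_a$: the collapsing components $\{x=z\}$ and the conic are sent to $[0:a:-1]$ and $[1:0:0]$ respectively, whose subsequent $F_a^2$-orbits lie in $L_x\cup L_y$ and match the dynamics of $g_a$ (or of its counterpart on $L_x$). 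Since strongly PCF in $\p^2$ is just $1$-deep PCF, this reduces (2) to the claim that $F_a^2$ is strongly PCF if and only if $-a^2$ is preperiodic under $g_a$.

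Finally, I would prove that the family $(g_a)_{a\in\C^*}$ is non-isotrivial in $\mathcal{M}_2(\p^1)$ by computing the two non-zero fixed points $w_1,w_2$ (which solve $w^2+(2+a^2)w+1=0$) and checking that their multipliers $\mu_i=2/(w_i+1)$, obtained from $(w_i+1)^2=-a^2 w_i$, have product $\mu_1\mu_2=-4/a^2$, which is a non-constant function of $a$. Inside any non-isotrivial family of quadratic rational maps of $\p^1$, each preperiodicity condition on the free critical orbit is cut out by a non-zero polynomial equation in the parameter, so the set of PCF parameters in the family is a countable union of finite sets. Hence $\{a\in\C^*:-a^2\text{ is preperiodic under }g_a\}$ is countable, and for every other $a$ the map $F_a^2$ fails to be strongly PCF, proving (2). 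The main obstacle is the case analysis in the previous paragraph, especially tracking the orbits contributed by the collapsing components $\{x=z\}$ and $\{x^2-y^2-z^2=0\}$; once that reduction is in place, the countability step is completely standard.
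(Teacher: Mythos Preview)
Your approach is essentially the same as the paper's: both restrict $F_a^2$ to the $F_a^2$-invariant line $\{y=0\}$, obtain the degree-$2$ map $[-a^2x^2:(x+z)^2]$ on $\p^1$, and argue that this one-parameter family is non-isotrivial so that only countably many parameters give a PCF restriction. The paper reaches non-isotriviality via the explicit conjugacy $g_a\sim(z\mapsto z^2-1/a^2)$ rather than your multiplier computation $\mu_1\mu_2=-4/a^2$, but the conclusion is identical. (Incidentally, your Jacobian gives the correct $\mathcal{C}(F_a)=\{y=0\}\cup\{x=z\}$; the paper's inclusion of $\{x=0\}$ in $\mathcal{C}(F_a)$ is a slip, though harmless since $\{x=0\}\subset\mathcal{P}(F_a)$ anyway.)

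The one place you are doing more work than necessary is the ``main obstacle'' you flag. For part (2) you only need the implication \emph{strongly PCF $\Rightarrow$ $g_a$ PCF}, not the full equivalence. Since $\{y=0\}$ is an $F_a^2$-invariant component of $\mathcal{C}(F_a^2)$, the critical points of $g_a=(F_a^2)|_{\{y=0\}}$ lie in $\mathcal{C}\bigl((F_a^2)_{(1)}\bigr)$ and their $F_a^2$-orbits coincide with their $g_a$-orbits in $\{y=0\}$; so if $g_a$ is not PCF then $F_a^2$ already fails to be $1$-deep PCF. No case analysis over $\{x=0\}$, $\{x=z\}$, or the conic $\{x^2-y^2-z^2=0\}$ is required. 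The paper proceeds exactly this way, and you can safely drop that step.
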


\begin{proof}
For any $a\neq0$, the point $I=[1:0:1]$ is clearly the unique indeterminacy point of $F_a$. It also is an indeterminacy point of $F_a^2$, hence $F_a^2$ is rational. Moreover, the critical locus of $F_a$ is
\[\cal{C}(F_a)=\{x=0\}\cup\{y=0\}\cup\{z=x\}~.\]
Note that $I\in\{y=0\}\subset\mathcal C(F_a)$ and $F_a(I)=\{x=0\}$.   Since $F_a(\{y=0\})=\{x=0\}$, $F_a(\{x=0\})= \{y=0\}$, and $F_a(\{x=z\})=\{[1:0:0]\}$, we get
\[F_a(\mathcal{C}(F_a))\subset\cal{C}(F_a)\setminus\{I\}\]
and $\mathcal{P}(F_a)=\{x=0\}\cup\{y=0\}$ is disjoint from $I$. Hence, $F_a$ is post-critically finite, as is $F_a^2$. To prove that for a good choice of $a\neq0$ the map $F_a^2$ is not strongly post-critically finite,
we look at the restriction
\[f_a:=F^2_a|_{\{y=0\}}:\{y=0\}\simeq\p^1\rightarrow\{y=0\}\simeq\p^1,\]
which is the quadratic rational map of $\p^1$:
\[f_a([z:t])=[-a^2z^2(z-t)^2:(z^2-t^2)^2]=[-a^2z^2:(z+t)^2],\]
$[z:t]\in\p^1$. Its critical points are $[0:1]$ and $[1:1]$. Moreover, $f_a([0:1])=[0:1]$, hence, $f_a$ is a quadratic polynomial.  It is conjugate to the map
\[p_a:z\in\C\mapsto z^2-\frac{1}{a^2}\in\C~.\]
In particular, $f_a$ is post-critically finite if and only if $p_a$ is post-critically finite, i.e. if and only if $p_a^n(0)=p_a^k(0)$ for some $n>k\geq0$. But the family $(p_a)_{a\in\C\setminus\{0\}}$ is a $2$-to-$1$ cover of a punctured line of $\mathcal{M}_2(\p^1)\setminus\{[z^2]\}$ so no equation of the form $p_a^n(0)=p_a^k(0)$ is satisfied by all $a\neq0$. In particular, there exists only countably many such parameters.
\end{proof}

 \section{Families of Latt\`es maps in dimension at least 2} \label{sect_lattes}
 Our main goal in this section is to prove Theorem \ref{tm:rigidLattes}; the proof decomposes into two steps. First, we prove that the induced analytic family of complex dimension $k$ tori up to biholomorphism has the same dimension as the family of Latt\`es maps. We then follow Milnor's proof for the case $k=1$ and adapt his argument.

\subsection{Families of abelian varieties}

This paragraph is devoted to recalling classical results concerning abelian varieties. For the material of this paragraph, we refer to~\cite{Debarre}.

\medskip

\par\noindent\textit{The moduli space of polarized abelian varieties.}

\par A \emph{polarization} on a complex abelian variety $\mathcal{T}$ is an \emph{integral} K\"ahler form $\omega$ on $\mathcal{T}$, i.e. a K\"ahler form with integral cohomology class in $H^2(\mathcal{T},\mathbb{R})$.  A complex torus $\mathcal T=\mathbb C^k/\Gamma$ admits an integral K\"ahler form if and only if there is a basis for $\mathbb C^k$, integers $d_1,d_2,\dots,d_k$ satisfying $d_1\,|\cdots|\,d_k$, and a matrix $\tau\in M_k(\C)$ with $\tau^t=\tau$ and $\mathrm{Im}(\tau)>0$ such that $\Gamma_\tau:=\tau \Z^k\oplus\Delta\Z^k$, where $\Delta=\mathrm{diag}(d_1,\ldots,d_k)\in M_k(\Z)$.  In this case, we say the polarization $\omega$ on $\mathcal{T}$ is of \emph{type} $\Delta$.

We now give the classical definition of the moduli space of type $\Delta$ abelian varieties. Let
\[\mathcal{H}_k:=\{\tau \in M_k(\C)\, ; \ \tau^t=\tau ~, \ \mathrm{Im}(\tau)>0\}.\]
For any field $K$, let $\mathrm{Sp}_{2k}(K)$ be the symplectic group
\[\mathrm{Sp}_{2k}(K):=\{ M\in \mathrm{GL}_{2k}(K)\, ; \ MJM^t=J\}~,\]
where $J=\left(\begin{array}{cc}
0 & I_k\\
-I_k & 0
\end{array}\right)\in M_{2k}(K)$.
We also let $\sigma_\Delta:\mathrm{GL}_{2k}(\Q)\longrightarrow \mathrm{GL}_{2k}(\Q)$ be defined by
\[\sigma_\Delta(M):=\left(\begin{array}{cc}
I_k & 0 \\
0 & \Delta \\
\end{array}\right)^{-1}M\left(\begin{array}{cc}
I_k & 0 \\
0 & \Delta \\
\end{array}\right), \ \ M\in \mathrm{GL}_{2k}(\Q).\]
Finally, we let $G_\Delta:=\sigma_\Delta(M_{2k}(\Z))\cap\mathrm{Sp}_{2k}(\Q)$.

~

The well-defined map
\[\left(M=\left(\begin{array}{cc}
A & B\\
C & D
\end{array}\right),\tau\right) \longmapsto M\cdot\tau:=(A\tau+B)(C\tau+D)^{-1}\]
defines an action of the group $\mathrm{Sp}_{2k}(\R)$ on $\mathcal{H}_k$. Moreover, any discrete subgroup of $\mathrm{Sp}_{2k}(\R)$ acts properly discontinuously on $\mathcal{H}_k$. The main result we rely on is that two polarized abelian varieties $\C^k/\Gamma_\tau$ and $\C^k/\Gamma_\gamma$ of type $\Delta$ are isomorphic if and only if there exists $M\in G_\Delta$ with $M\cdot\tau=\gamma$.

\begin{definition}
The \emph{moduli space} $\mathcal{A}_{k,\Delta}$ of polarized abelian varieties of type $\Delta$ is the set of isomorphism classes of such varieties. Equivalently, this is $\mathcal{H}_k/G_\Delta$.
\end{definition}

It is known that the moduli space $\mathcal{A}_{k,\Delta}$ is a quasi-projective variety and that it has dimension $k(k+1)/2$. We also denote by
\[\Pi_\Delta:\mathcal{H}_k\longrightarrow \mathcal{A}_{k,\Delta}\]
the quotient map.

\medskip

\paragraph*{\textit{Dimension in moduli of families of abelian varieties.}}
We say that $(\mathcal{T}_t)_{t\in \Lambda}$ is a \emph{holomorphic family} of abelian varieties if there exists a complex manifold $\widehat{\mathcal{T}}$ and a holomorphic map $p:\widehat{\mathcal{T}}\longrightarrow\Lambda$, such that $p^{-1}\{t\}\cong\mathcal{T}_t$ for all $t\in\Lambda$.

We want to define properly a notion of dimension in moduli for the family $(\mathcal{T}_t)_{t\in\Lambda}$.
First, notice that the polarization of $\mathcal{T}_t$ is defined by an \emph{entire} K\"ahler form. As seen in the proof of~\cite[Proposition VI.1.3]{Debarre}, when $t$ varies in $\Lambda$, the integers $d_1\,|\cdots|\,d_k$ cannot change. In particular, the type of the polarization is constant in the family $(\mathcal{T}_t)_{t\in\Lambda}$ and there exists a holomorphic map $t\in\Lambda\longmapsto \tau(t)\in\mathcal{H}_k$ with $\mathcal{T}_t=\C^k/(\tau(t)\Z^k\oplus\Delta \Z^k)$ for all $t\in\mathcal{H}_k$.

As a consequence, we can define the \emph{type} of a holomorphic family $(\mathcal{T}_t)_{t\in\Lambda}$ of abelian varieties as the type $\mathcal{T}_{t_0}$ of any $t_0\in\Lambda$.

\begin{definition}
We say that a holomorphic family $(\mathcal{T}_t)_{t\in\Lambda}$ of abelian varieties of type $\Delta$ has \emph{dimension $q$ in moduli} if the analytic set $\Pi_\Delta(\Lambda)\subset\mathcal{A}_{k,\Delta}$ has dimension $q$.
\end{definition}

 \subsection{Latt\`es maps and abelian varieties}

\paragraph*{\textit{Families of Latt\`es maps.}}

Recall that $F\in\Hom_d(\p^k)$ is a \emph{Latt\`es map} if there exists a complex $k$-dimensional abelian variety $\mathcal{T}$, an isogeny $\mathcal{I}:\mathcal{T}\to \mathcal{T}$, and a Galois branched cover $\Theta:\mathcal{T}\to\p^k$ making the following diagram commute
\begin{center}
$\xymatrix {\relax
\mathcal{T} \ar[r]^{\mathcal{I}} \ar[d]_{\Theta} & \mathcal{T}\ar[d]^{\Theta} \\
\p^{k} \ar[r]_{F} & \p^{k}}$
\end{center}
(see e.g. \cite{Dupont-spherical}). These maps are known to be post-critically finite (see e.g. \cite{bertelootdupont}). Another way to present Latt\`es maps, given by Berteloot and Loeb~\cite[Th\'eor\`eme 1.1 $\&$ Proposition 4.1]{BertelootLoeb}, is the following (see also~\cite{bertelootdupont}):  A \emph{complex crystallographic} group $G$ is a discrete group of affine transformations of a complex affine space $V$ such that the quotient $X = V/G$ is compact.

A degree $d$ endomorphism $F:\p^k\longrightarrow\p^k$ is a Latt\`es map if there exists a ramified cover $\sigma:\C^k\longrightarrow\p^k$, an affine map $A:\C^k\longrightarrow\C^k$ whose linear part is $\sqrt{d}\cdot U$ where $U\in\mathbb{U}(k)$ is a diagonal unitary linear map, and a complex crystallographic group $G < \mathbb{U}(k)\rtimes\C^k$ (hence discrete and closed) such that the following diagram commutes:
 \begin{center}
$\xymatrix {\relax
\C^k \ar[r]^{A} \ar[d]_{\sigma} & \C^k\ar[d]^{\sigma} \\
\p^k \ar[r]_{F} & \p^k }$
\end{center}
and the group $G$ acts transitively on fibres of $\sigma$.

 \begin{lemma}\label{lm:unitary}
Let $(f_t)_{t\in\Lambda}$ be any holomorphic family of Latt\`es maps of $\p^k$. Then there exists a holomorphic family $(G_t)_{t\in\Lambda}$ of complex crystallographic groups, a holomorphic family $(\sigma_t)_{t\in\Lambda}$ of branched covers, and a holomorphic family of affine maps $(A_t)_{t\in\Lambda}$, as above. Moreover, the linear part of $A_t$ is independent of $t$.
 \end{lemma}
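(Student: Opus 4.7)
The plan is to produce the Berteloot--Loeb data pointwise, then upgrade to holomorphic dependence on $t$, and finally pin down the linear part by a totally-real rigidity argument.

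First, for each $t\in X$ the Latt\`es map $f_t$ is induced by an isogeny $D_t\colon\mathcal{T}_t\to\mathcal{T}_t$ on a complex $k$-dimensional abelian variety together with a Galois branched cover $\Theta_t\colon\mathcal{T}_t\to\p^k$. The Berteloot--Loeb presentation then realizes $\mathcal{T}_t=\C^k/\Lambda_t$ and lifts $D_t$ to an affine self-map $A_t\colon\C^k\to\C^k$ whose linear part, in a suitable basis of $\C^k$, equals $\sqrt{d}\cdot U_t$ with $U_t\in\mathrm{U}(k)$ diagonal; the cover $\sigma_t\colon\C^k\to\p^k$ is the composition of the universal cover with $\Theta_t$, and $G_t$ is the complex crystallographic group generated by translations by $\Lambda_t$ together with the deck transformations of $\Theta_t$.

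Next I would upgrade this pointwise construction to a holomorphic family. Working on a contractible chart of $X$, the underlying family of polarized abelian varieties $(\mathcal{T}_t)_{t\in X}$ can be read off from $(f_t)_t$ holomorphically (via, e.g., the Green current of $f_t$ or the multiplier data along periodic cycles, both of which vary holomorphically in $t$), and by the rigidity of polarization types recalled in the preceding paragraph the type $\Delta$ is independent of $t$. The lattice bundle $t\mapsto \Lambda_t$ then trivializes on the chart, yielding a holomorphic map $t\mapsto\tau(t)\in\mathcal{H}_k$ such that $\mathcal{T}_t=\C^k/(\tau(t)\Z^k\oplus\Delta\Z^k)$. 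All Berteloot--Loeb data $(\sigma_t, A_t, G_t)$ then become holomorphic in $t$; the residual ambiguity in the lift $A_t$ is by elements of $G_t$, and this ambiguity can be resolved coherently on the contractible chart.

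Once holomorphic dependence is in hand, the conclusion on the linear part is formal. Indeed, $t\mapsto A_t$ being holomorphic implies that its linear part $t\mapsto \sqrt{d}\cdot U_t$ is also holomorphic and takes values in the real Lie subgroup $\sqrt{d}\cdot\mathrm{U}(k)$ of $\mathrm{GL}_k(\C)$. Since $\mathrm{U}(k)$ is a totally real submanifold of $\mathrm{GL}_k(\C)$, any holomorphic map from a connected complex manifold to $\mathrm{U}(k)$ is constant; applying this on each connected component of $X$ yields that $U_t$, and hence the linear part of $A_t$, is independent of $t$. The main obstacle is precisely the middle step: producing holomorphic representatives of the triple $(\sigma_t, A_t, G_t)$ in families, since the universal cover of $\mathcal{T}_t$ and the lift $A_t$ of $D_t$ are canonical only up to elements of $G_t$, and these ambiguities must be resolved coherently over a chart of $X$. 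This amounts to trivializing the lattice bundle $\Lambda_t$ together with the deck group of $\Theta_t$, a standard operation for holomorphic families of abelian varieties but one that requires some care; once it is carried out, the totally-real rigidity of the unitary group delivers the last assertion immediately.
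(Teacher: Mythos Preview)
Your approach is essentially the same as the paper's: both arguments reduce to the observation that a holomorphic map $t\mapsto U_t$ into the totally real subgroup $\mathrm{U}(k)\subset\mathrm{GL}_k(\C)$ must be constant. The paper treats the holomorphic dependence of $(G_t,\sigma_t,A_t)$ on $t$ as obvious and spends no effort on it, whereas you carefully outline how to trivialize the lattice bundle and resolve lifting ambiguities; this extra care is reasonable but does not change the structure of the proof.
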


 \begin{proof}
 Let $(f_t)_{t\in\Lambda}$ be a holomorphic family of Latt\`es maps and let $G_t$, $A_t$ and $\sigma_t$ be such that $\sigma_t\circ A_t=f_t\circ\sigma_t$ on $\C^k$ be given as in~\cite{BertelootLoeb}. It follows from Sections 4 and 5 of~\cite{BertelootLoeb} that $G_t$, $A_t$ and $\sigma_t$ depend holomorphically on $t$. Moreover, the linear part of $A_t$ can be written $\sqrt{d}\cdot U_t$ with $U_t\in\mathbb{U}(k)\subset \mathrm{GL}_k(\C)$ and the map $t\mapsto U_t$ is holomorphic. Hence, it has to be constant.
  \end{proof}

\paragraph*{\textit{Equality of the dimensions in moduli.}}

We now may prove the following key fact.
\begin{proposition}\label{prop_samedim}
Let $\Lambda$ be a complex manifold. Let $(f_\lambda)_{\lambda\in\Lambda}$ be a holomorphic family of degree $d$ Latt\`es maps of $\p^k$, and let $(\mathcal{T}_\lambda)_{\lambda\in\Lambda}$ be any induced family of abelian varieties. Then the family $(f_\lambda)_{\lambda\in\Lambda}$ has dimension $q$ in moduli if and only if $(\mathcal{T}_\lambda)_{\lambda\in\Lambda}$ has dimension $q$ in moduli.
\end{proposition}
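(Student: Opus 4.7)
My plan is to realize both moduli-theoretic dimensions as dimensions of images of $\Lambda$ under two naturally defined holomorphic maps, and then to compare those two maps by relating their fibers via the universal cover $\C^k$.

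By Lemma~\ref{lm:unitary}, I fix holomorphic families of Lattès uniformization data $(\sigma_\lambda, A_\lambda, G_\lambda)_{\lambda\in\Lambda}$ with the linear part of $A_\lambda$ equal to a fixed $\sqrt{d}\,U$, where $U\in\mathbb{U}(k)$ is diagonal. The induced abelian varieties take the form $\mathcal{T}_\lambda = \C^k/\Gamma_\lambda$ with $\Gamma_\lambda = \tau(\lambda)\Z^k\oplus \Delta \Z^k$ for some holomorphic period map $\tau:\Lambda\to\mathcal{H}_k$. This produces two holomorphic maps out of $\Lambda$: the classifying map $\Phi:\Lambda\to\mathcal{M}_d(\p^k)$ of the family of Lattès maps, whose image has dimension $\dim_{\mathcal{M}}(f_\lambda,\Lambda)$, and the period map $\Psi:\Lambda\to\mathcal{A}_{k,\Delta}=\mathcal{H}_k/G_\Delta$, whose image has, by definition, the dimension in moduli of the induced family of abelian varieties.

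The core of the proof is to show that the fibers of $\Phi$ and $\Psi$ agree on the image of $\Lambda$ up to countable corrections, which will force the two image dimensions to coincide. For one implication, suppose $m\in\PGL(k+1,\C)$ satisfies $m\circ f_{\lambda_1}\circ m^{-1}=f_{\lambda_2}$. Then $m\circ\sigma_{\lambda_1}$ is a second Lattès uniformization of $f_{\lambda_2}$ with the \emph{same} affine map $A_{\lambda_1}$; the uniqueness up to affine automorphisms of $\C^k$ of the Berteloot--Loeb uniformization yields an affine $L:\C^k\to\C^k$ satisfying $m\circ\sigma_{\lambda_1}=\sigma_{\lambda_2}\circ L$ and $L\circ A_{\lambda_1}\circ L^{-1}=A_{\lambda_2}$. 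Comparing linear parts forces the linear part $L_0$ of $L$ to commute with $U$, while the equivariance $m\circ\sigma_{\lambda_1}=\sigma_{\lambda_2}\circ L$ forces $L_0(\Gamma_{\lambda_1})=\Gamma_{\lambda_2}$ and preserves the polarization. Hence $\mathcal{T}_{\lambda_1}\cong\mathcal{T}_{\lambda_2}$ as polarized abelian varieties and $\Psi(\lambda_1)=\Psi(\lambda_2)$.

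For the converse direction, a polarized isomorphism $\mathcal{T}_{\lambda_1}\cong\mathcal{T}_{\lambda_2}$ induced by a linear $L_0$ intertwining the lattices can be promoted to an affine $L$ on $\C^k$ conjugating $A_{\lambda_1}$ to $A_{\lambda_2}$, after adjusting by a suitable translation and, if necessary, composing $L_0$ with an automorphism of $\mathcal{T}_{\lambda_2}$ lying in the centralizer of $U$. Such an $L$ then descends via the $\sigma_\lambda$'s to an element of $\PGL(k+1,\C)$ conjugating $f_{\lambda_1}$ to $f_{\lambda_2}$, giving $\Phi(\lambda_1)=\Phi(\lambda_2)$. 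The main obstacle is verifying that the correspondence between PGL-conjugations and polarized isomorphisms is well-defined modulo at most countable ambiguities (the group $G_\Delta$ on one side and the automorphism group of the Lattès map on the other), so that the two image dimensions genuinely agree rather than differ by a positive-dimensional correction; this rigidity is precisely what Lemma~\ref{lm:unitary} secures, by preventing any continuous family of non-trivial liftings of the identity in the unitary direction.
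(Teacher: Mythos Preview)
Your converse implication has a genuine gap. Given a polarized isomorphism $L_0:\mathcal{T}_{\lambda_1}\to\mathcal{T}_{\lambda_2}$, to conjugate $A_{\lambda_1}$ to $A_{\lambda_2}$ you need the linear part of $L$ to commute with $U$, since both $A_{\lambda_i}$ share the linear part $\sqrt{d}\,U$. An arbitrary polarized isomorphism has no reason to commute with $U$, and your proposed repair---composing $L_0$ with an automorphism of $\mathcal{T}_{\lambda_2}$ lying in the \emph{centraliser} of $U$---goes the wrong way: multiplying something outside the centraliser by something inside it keeps you outside. For a generic polarized abelian variety $\mathrm{Aut}(\mathcal{T})=\{\pm I\}$, so there is nothing nontrivial to adjust by. Likewise the translation part: you need $L$ to carry the torsion class of $\gamma_{\lambda_1}$ to that of $\gamma_{\lambda_2}$, and nothing in your argument forces this.

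The paper does not attempt a pointwise fibre comparison. It argues by contradiction on one-parameter disks. For the direction you mishandle: assuming $r<q$, one finds a holomorphic disk $\D\subset\Lambda$ on which the tori are constant, say $\mathcal{T}=\C^k/\Gamma$, while the $f_t$ vary in moduli. The lifts $\tilde D_t=\sqrt{d}\,U+\tau_t$ then form a \emph{holomorphic} map of $\D$ into the \emph{discrete} set $\{\sqrt{d}\,U+\tau:\tau\in\tfrac{1}{2}\Gamma,\ \sqrt{d}\,U(\Gamma)\subset\Gamma\}$, hence $D_t\equiv D$ is constant. With both $\mathcal{T}$ and $D$ fixed, the covers $\Theta_t:\mathcal{T}\to\p^k$ form a holomorphic family of isomorphic Galois covers, so $\Theta_t=\phi_t\circ\Theta_{t_0}$ for a holomorphic disk $\phi_t\in\PGL(k+1,\C)$, whence $f_t=\phi_t\circ f_{t_0}\circ\phi_t^{-1}$, contradicting the choice of $\D$. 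The discreteness of the admissible affine maps on a \emph{fixed} torus is exactly the rigidity you are missing; it replaces the need to manufacture a lattice isomorphism commuting with $U$. Your appeal to Lemma~\ref{lm:unitary} only fixes the linear part across the family; it says nothing about compatibility of a single $L_0$ with $U$.

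A smaller point: in your first direction you invoke ``uniqueness up to affine automorphisms of the Berteloot--Loeb uniformization'' without proof, and you do not verify that the resulting $L_0$ is a \emph{polarized} isomorphism (which is what $\Psi$ records). The paper bypasses this by arguing directly that if $f_t=\phi_t^{-1}f_{t_0}\phi_t$ on a disk, then $\phi_t\circ\Theta_t$ and $\Theta_{t_0}$ are isomorphic Galois branched covers of $\p^k$, hence their total spaces $\mathcal{T}_t$ and $\mathcal{T}_{t_0}$ are isomorphic.
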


\begin{proof}
Pick any induced holomorphic family of tori $(\mathcal{T}_t)_{t\in\Lambda}$. Let $q$ and $r$ stand, respectively, for the dimension in moduli of $(f_t)_{t\in\Lambda}$ and $(\mathcal{T}_t)_{t\in\Lambda}$. Assume first that $r>q$. Choose $t_0\in \Lambda$. By assumption, there exists a holomorphic disk $\D\subset\Lambda$ centered at $t_0$ and such that $(\mathcal{T}_t)_{t\in \D}$ has dimension $1$ in moduli and the canonical projection $\Pi:\D\longmapsto\mathcal{M}_d(\p^k)$ is constant. In particular, there exists a holomorphic disk $(\phi_t)_{t\in\D}$ of $PSL(k+1,\C)$ such that $f_t=\phi_t^{-1}\circ f_{t_0}\circ\phi_t$ on $\p^k$. Since $(\mathcal{T}_t)_t$ is an induced family of abelian varieties, we have
\begin{center}
$\xymatrix {\relax
\mathcal{T}_t \ar[r]^{D_t} \ar[d]_{\phi_t\circ\Theta_t} & \mathcal{T}_t\ar[d]^{\phi_t\circ\Theta_t} \\
\p^k \ar[r]_{f_{t_0}} & \p^k~. }$
\end{center}
As a consequence, $\phi_t\circ\Theta_t:\mathcal{T}_t\rightarrow\p^k$ and $\Theta_{t_0}:\mathcal{T}_{t_0}\rightarrow\p^k$ are isomorphic Galois branched covers. Hence, there exists an analytic isomorphism $\psi_t:\mathcal{T}_t\rightarrow\mathcal{T}_{t_0}$ for any $t\in\D$. Hence, $\mathcal{T}_t$ is isomorphic to $\mathcal{T}_{t_0}$ for any $t\in\D$. This implies that $(\mathcal{T}_t)_{t\in\D}$ is isotrivial in moduli. This is a contradiction.

~

For the converse inequality, we also proceed by contradiction. Assume $r<q$. Since $\mathcal{M}_d(\p^k)$ is a geometric quotient, for any $t_0\in\Lambda$, there exists a local dimension $q$ complex submanifold $\Lambda_0\subset \Lambda$ containing $t_0$ and such that the canonical projection $\Pi:\Lambda_0\longmapsto \mathcal{M}_d(\p^k)$ has discrete fibers over its image.

As above, this implies the existence of a holomorphic disk $\D\subset\Lambda_0$ centered at $t_0$ and such that the corresponding family $(\mathcal{T}_t)_{t\in\D}$ is isotrivial in moduli, i.e. $\mathcal{T}_t\simeq\mathcal{T}_{t_0}$ as abelian varieties for any $t\in\D$. We, thus, may assume that $\mathcal{T}:=\mathcal{T}_{t_0}=\mathcal{T}_t$ for all $t\in\D$. We, thus, have a holomorphic family $(D_t)_{t\in\D}$ of degree $\sqrt{d}$ isogenies of $\mathcal{T}$ and a holomorphic family of Galois covers $\Theta_t:\mathcal{T}\longrightarrow\p^k$ with
\begin{center}
$\xymatrix {\relax
\mathcal{T} \ar[r]^{D_t} \ar[d]_{\Theta_t} & \mathcal{T}\ar[d]^{\Theta_t} \\
\p^k \ar[r]_{f_t} & \p^k~. }$
\end{center}
Let $\Gamma$ be a lattice defining $\mathcal{T}$, i.e. such that $\mathcal{T}=\C^k/\Gamma$. Lifting $D_t$ to an affine map $\tilde D_t:\C^k\longrightarrow\C^k$, we end up with a holomorphic map $t\in\D\longmapsto\tilde D_t\in\textup{Aff}(\C^k)$, which satisfies $\tilde D_t(\Gamma)\subset\Gamma$ and $\tilde D_t=\sqrt{d}\cdot U_t+\tau_t$ with $U_t\in\mathbb{U}_k$ and $\tau_t\in\frac{1}{2}\Gamma$. As a consequence, the map $t\mapsto\tilde D_t$ is discrete, hence constant. The family $(D_t)_{t\in\D}$ is thus constant. Let $D:=D_t$ for all $t$.

Finally, by assumption, there exists an isomorphism $\alpha_t:\mathcal{T}_t\rightarrow\mathcal{T}_{t_0}$ for all $t\in\D$, which depends analytically on $t$. Hence $\Theta_t=\Theta_{t_0}\circ \alpha_t$ for all $t\in\D$, and $(\Theta_t)_{t\in\D}$ is a holomorphic family of Galois cover of $\p^k$ which are isomorphic.
As a consequence, there exists a holomorphic disk $t\in\D\longmapsto\phi_t\in PSL(k+1,\C)$ such that $\Theta_t=\phi_t\circ \Theta_{t_0}$, for all $t\in\D$. Hence, we get
\[\phi_t\circ\left(\Theta_{t_0}\circ D\right)=\left(f_t\circ\phi_t\right)\circ \Theta_{t_0}\]
which we may rewrite
\[f_{t_0}\circ \Theta_{t_0}=\Theta_{t_0}\circ D=\left(\phi_t^{-1}\circ f_t\circ\phi_t\right)\circ \Theta_{t_0}~.\]
This gives $f_{t_0}=\phi_t^{-1}\circ f_t\circ \phi_t$, i.e. $\D\subset\Pi^{-1}\{f_{t_0}\}$. This is a contradiction since the fibers of $\Pi$ are discrete  in $\Lambda_0$.
\end{proof}

An immediate consequence is the following.
\begin{corollary}\label{cor_samedim}
Let $(f_\lambda)_{\lambda\in\Lambda}$ be a holomorphic family of degree $d$ Latt\`es maps of $\p^k$. Then the family $(f_\lambda)_{\lambda\in\Lambda}$ has dimension in moduli at most $k(k+1)/2$.
\end{corollary}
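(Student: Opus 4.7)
The plan is to use Proposition \ref{prop_samedim} to transport the question from a family of Lattès maps to the induced family of polarized abelian varieties, and then invoke the dimension of the moduli space $\mathcal{A}_{k,\Delta}$.

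More precisely, I would proceed as follows. Given a holomorphic family $(f_\lambda)_{\lambda\in\Lambda}$ of degree $d$ Lattès maps of $\p^k$, I would first produce a holomorphic family $(\mathcal{T}_\lambda)_{\lambda\in\Lambda}$ of complex abelian varieties inducing it, together with a family of isogenies $D_\lambda:\mathcal{T}_\lambda\to\mathcal{T}_\lambda$ and branched Galois covers $\Theta_\lambda:\mathcal{T}_\lambda\to\p^k$ semi-conjugating $D_\lambda$ to $f_\lambda$. This is possible pointwise by definition of a Lattès map, and the holomorphic dependence in $\lambda$ can be obtained from the uniformization description recalled via Lemma \ref{lm:unitary}. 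As noted in the discussion preceding the definition of dimension in moduli of a family of abelian varieties, the integers $d_1\mid\cdots\mid d_k$ encoding the type of the polarization cannot vary in a connected holomorphic family, so the family $(\mathcal{T}_\lambda)$ has a well-defined type $\Delta$, and thereby defines a holomorphic map
\[\Lambda\longrightarrow \mathcal{A}_{k,\Delta}=\mathcal{H}_k/G_\Delta.\]

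Now I would apply Proposition \ref{prop_samedim}, which asserts that the dimension in moduli of $(f_\lambda)_{\lambda\in\Lambda}$ equals that of $(\mathcal{T}_\lambda)_{\lambda\in\Lambda}$. By definition, the latter is the dimension of the image of $\Lambda$ in $\mathcal{A}_{k,\Delta}$, and this image is an analytic subset of $\mathcal{A}_{k,\Delta}$. Hence
\[\dim_{\mathcal{M}}(f_\lambda,\Lambda)=\dim_{\mathcal{A}_{k,\Delta}}\!\bigl(\text{image of }\Lambda\bigr)\le\dim\mathcal{A}_{k,\Delta}.\]

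Finally, I would quote the classical fact recalled in the excerpt that $\mathcal{A}_{k,\Delta}$ is a quasi-projective variety of dimension $k(k+1)/2$ (coming from $\dim\mathcal{H}_k = k(k+1)/2$ and the fact that $G_\Delta$ acts properly discontinuously). This immediately yields $\dim_{\mathcal{M}}(f_\lambda,\Lambda)\le k(k+1)/2$, which is the claim.

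There is essentially no real obstacle here since Proposition \ref{prop_samedim} does all the heavy lifting; the only thing to be careful about is verifying that the type $\Delta$ is constant along the connected family so that the induced family genuinely lies in a \emph{single} moduli space $\mathcal{A}_{k,\Delta}$, but this is exactly the content of the paragraph preceding the definition of dimension in moduli for holomorphic families of abelian varieties.
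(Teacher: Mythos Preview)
Your argument is correct and matches the paper's approach exactly: the paper states this corollary as ``an immediate consequence'' of Proposition~\ref{prop_samedim}, using precisely the fact that $\dim\mathcal{A}_{k,\Delta}=k(k+1)/2$. Your added remarks about constancy of the type $\Delta$ and the holomorphic dependence via Lemma~\ref{lm:unitary} are the right justifications for the implicit steps.
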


\subsection{Classification of maximal families of Latt\`es maps}

 We now want to prove Theorem~\ref{tm:rigidLattes}. We use the description of Berteloot and Loeb of Latt\`es maps \cite{BertelootLoeb}. When $U\colon\C^k\rightarrow\C^k$ is linear, we denote by $E_\lambda:=\ker\left(\lambda I_k-U\right)$ the \emph{eigenspace} associated with the eigenvalue $\lambda\in\C$.

 We may prove the following.

\begin{theorem}\label{tm:rigidlattesnewversion}
Assume $(f_t)_{t\in\Lambda}$ is a maximal family of Latt\`es maps containing $f$ and let $D=\sqrt{d}\cdot U+\gamma$ be the affine map inducing $f$. Let $q_{\pm}:=\dim_\C \ker(U\pm I_k)$; then
\[\dim_{\mathcal{M}}(f_t,\Lambda)=\left(q_{+}+q_{-}\right)\cdot\frac{q_++q_{-}+1}{2}.\]
\end{theorem}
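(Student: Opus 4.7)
The plan is to reduce via Proposition~\ref{prop_samedim} to the analogous statement for the induced holomorphic family $(\mathcal{T}_t)_{t\in X}$ of polarized abelian varieties of fixed type $\Delta$, then compute this dimension by analyzing which deformations of $\tau \in \mathcal{H}_k$ are compatible with a prescribed linear part. By Lemma~\ref{lm:unitary}, after normalization the linear part $\sqrt{d}\cdot U$ of the inducing affine map $D_t$ may be assumed constant throughout $X$, so the question becomes: \emph{how large a family of period matrices $\tau$ can coexist with a fixed affine endomorphism with linear part $\sqrt{d}\,U$?}

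Split $\C^k = V \oplus W$ where $V := E_+ \oplus E_-$ is the sum of the eigenspaces of $U$ for the eigenvalues $\pm 1$ (hence $\dim_\C V = q_+ + q_-$) and $W$ is the sum of the remaining eigenspaces. In a symplectic basis of $\Gamma_{\tau(t)}$ adapted to this orthogonal decomposition, write
\[
\tau(t) = \begin{pmatrix} \tau_V(t) & \tau_{VW}(t) \\ \tau_{VW}(t)^t & \tau_W(t) \end{pmatrix}.
\]
The condition that the constant endomorphism $\sqrt{d}\,U$ preserves $\Gamma_{\tau(t)}$ reads $\sqrt{d}\,U\cdot [\tau(t), \Delta] = [\tau(t), \Delta]\cdot M(t)$ for some $M(t) \in M_{2k}(\Z)$. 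Projecting onto the $W$-components yields a system whose spectrum is $\sqrt{d}\cdot\mathrm{Spec}(U|_W) \subset \C\setminus\R$; because these eigenvalues are non-real, $M(t)|_W$ must be an integer matrix with strictly complex spectrum, and standard facts about CM abelian varieties force the components $\tau_W(t)$ and $\tau_{VW}(t)$ (together with $M(t)|_W$) to lie in a countable set. Hence along any connected family only $\tau_V(t)$ can move, giving the upper bound
\[
\dim_{\mathcal{M}}(f_t, X) \;\leq\; \dim_\C \mathcal{H}_{q_+ + q_-} \;=\; \frac{(q_+ + q_-)(q_+ + q_- + 1)}{2}.
\]

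For the matching lower bound, I would build the extremal family explicitly. The restriction of $\sqrt{d}\,U$ to $V$ acts as $\pm\sqrt{d}\cdot I$, and by Theorem~\ref{tm:rigidLattes}(2) one has $\sqrt{d} \in \Z$ whenever $q_+ + q_- > 0$; therefore on $V$ the lattice-preservation condition is automatic for \emph{any} choice of $\tau_V$. Letting $\tau_V$ range freely over a local chart of $\mathcal{H}_{q_+ + q_-}$ while keeping $\tau_W$, $\tau_{VW}$, the integer matrix $M$, and the Galois branched cover $\Theta$ fixed produces a holomorphic family of polarized abelian varieties $\mathcal{T}_\sigma = \C^k/\Gamma_{\tau(\sigma)}$ on which $\sqrt{d}\,U + \gamma$ still descends to an isogeny of degree $d$. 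Pushing forward through $\Theta$ yields a holomorphic family of Lattès maps of $\p^k$ of the claimed dimension in moduli, and Proposition~\ref{prop_samedim} gives the equality.

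The main obstacle is the rigidity claim in the second paragraph: one must show that the lattice-preservation equation restricted to the $W$-block admits no positive-dimensional family of solutions. This is the higher-dimensional counterpart of Milnor's observation that a Lattès map of $\p^1$ whose affine model has linear part $az$ with $a$ non-real is rigid. In the Siegel setting, the argument reduces to showing that, for each of the countably many admissible choices of integer matrices $(A_W, C_W)$, the equation $\sqrt{d}\,U\cdot \tau_W = \tau_W A_W + \Delta_W C_W$ has at most one solution in the Siegel domain when the spectrum of $\sqrt{d}\,U|_W$ lies in $\C \setminus \R$; combined with the analyticity of $t \mapsto \tau(t)$, this forces $\tau_W$ and $\tau_{VW}$ to be locally constant along $X$.
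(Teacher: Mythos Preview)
Your approach is essentially the paper's: both reduce via Proposition~\ref{prop_samedim} and Lemma~\ref{lm:unitary} to the induced family of polarized abelian varieties, split $\C^k$ into the sum $V$ of the $(\pm1)$-eigenspaces of $U$ and its complement $W$, obtain the lower bound by letting the $V$-block of $\tau$ range over $\mathcal{H}_{q_++q_-}$, and obtain the upper bound by showing the $W$-block cannot move. The only substantive difference is in the upper bound: the paper argues by contradiction directly from the lattice condition $\sqrt d\,U(\Gamma_t)\subset\Gamma_t$, first treating the case $q_++q_-=0$ and then reducing the general case to it, whereas you package the same mechanism as a CM-rigidity statement. Both are valid; your last paragraph correctly identifies the Sylvester-type equation $\sqrt d\,U\,\tau_W=\tau_W A_W+\Delta_W C_W$ as the crux, and the paper's computation is exactly the verification that this equation has at most one solution once the (locally constant) integer data are fixed.

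Two points need repair. First, your appeal to Theorem~\ref{tm:rigidLattes}(2) for the integrality of $\sqrt d$ is circular: in the paper Theorem~\ref{tm:rigidLattes} is \emph{deduced from} the present Theorem~\ref{tm:rigidlattesnewversion}. You must instead derive $\sqrt d\in\Z$ directly from the hypothesis that $\sqrt d\,U$ preserves some lattice $\Gamma_{t_0}$ and that $U$ has an eigenvalue $\pm1$ (for instance via the integrality of the characteristic polynomial of the induced endomorphism of $\Gamma_{t_0}\simeq\Z^{2k}$, whose eigenvalues are the $\sqrt d\,u_i$ and their complex conjugates). Second, ``keeping the Galois branched cover $\Theta$ fixed'' does not make sense as written: $\Theta$ is a map out of the single torus $\mathcal{T}_{t_0}$, not out of $\mathcal{T}_\sigma$. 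What you need is a holomorphic family of covers $\Theta_\sigma:\mathcal{T}_\sigma\to\p^k$ through which the isogenies descend; the paper supplies this implicitly via the Berteloot--Loeb crystallographic-group description (the finite linear part of the group is rigid, the translation lattice deforms with $\tau$), and then closes with another application of Proposition~\ref{prop_samedim}.
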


\begin{remark}
Let $(f_t)_{t\in\Lambda}$ be any holomorphic family of Latt\`es maps of $\p^k$, let $(A_t)_{t\in\Lambda}$ be the holomorphic family of affine maps given by Lemma~\ref{lm:unitary}, and write $\sqrt{d}\cdot U$ the linear part of $A_t$ with $U\in\mathbb{U}(k)$. Be aware that for any integer $m\in\Z$, since $U$ is unitary, we have $\dim_\C E_{m}(\sqrt{d}\cdot U)=0$ for all $m\neq\pm\sqrt{d}$ and $\dim_\C E_{m}(\sqrt{d}\cdot U)=0$ for all $m$ if $d$ is not the square on an integer.
\end{remark}

\begin{proof}
Up to taking a connected subvariety of $\Lambda$, we may assume $q:=\dim_{\mathcal{M}} (f_t,\Lambda)$. According to Proposition~\ref{prop_samedim}, the induced family of abelian varieties $(\mathcal{T}_t)_{t\in \Lambda}$ has also dimension $q$ in moduli.

Choose $t_0\in\Lambda$. Notice that, by Lemma~\ref{lm:unitary}, if $A_t:\mathcal{T}_t\rightarrow\mathcal{T}_t$ is the affine map inducing $f_t$, then $A_t$ for all $t$ has linear part $U\in\mathbb{U}(k)$ independent of $t$. Our assumption implies that the isogeny $A=(A_t)_t$ preserves a family of abelian varieties with dimension in moduli $q$. Set
\[q_*:=\left(q_{+}+q_{-}\right)\cdot\frac{q_{+}+q_{-}+1}{2}~.\]
We now may prove it implies $q=q_*$. We prove first $q\geq q_*$. If $q_*=0$, this is trivial. Otherwise, up to linear change of coordinates, we have
\[U=\left(\begin{array}{ccc}
I_{q_{+}} & 0 & 0\\
0 & I_{q_{-}} & 0\\
0 & 0 & \star
\end{array}\right)\]
where $\star$ is a $(k-(q_++q_-))$-square diagonal unitary matrix. Let $\Gamma_{\tau_0}=\tau_0\Z^k\oplus\Delta\Z^k$ be a lattice preserved by $A_{t_0}$ with $\mathcal{T}_{t_0}\simeq\C^k/\Gamma_{\tau_0}$. Write
\[\tau_0=\left(\begin{array}{cc}
\tau_{1,1} & \tau_{1,2}  \\
\tau_{1,2}^t & \tau_{2,2}
\end{array}\right)\]
where $\tau_{1,1}\in\mathcal{H}_{q_*}$, $\tau_{2,2}\in\mathcal{H}_{k-q_*}$ and $\tau_{1,2}$ is a $(q_*,k-q_*)$-matrix.
For $\lambda\in\mathcal{H}_{q_*}$, we let
\[\tau(\lambda):=\left(\begin{array}{cc}
\lambda & \tau_{1,2}  \\
\tau_{1,2}^t & \tau_{2,2}
\end{array}\right)\in\mathcal{H}_{k}.\]
Let $\mathcal{T}_{\lambda}:=\C^k/\Gamma_{\lambda}$, where $\Gamma_{\lambda}=\tau(\lambda)\mathbb Z^k\oplus\Delta\mathbb Z^k$. The family $(\mathcal{T}_\lambda)$ has dimension $q_*$ in moduli. Remark also that
\[\gamma=\tau_0\cdot \underline{u}+\Delta\cdot \underline{v}\in\alpha\cdot  \Gamma_{0}\]
for some $\alpha\in\C^*$ (resp. $\underline{u},\underline{v}\in \alpha\cdot \Z^k$) and that this property has to be preserved in any family of Latt\`es maps containing $f$. Hence, we may let
\[\gamma(\lambda):=\tau(\lambda)\cdot \underline{u}+\Delta\cdot \underline{v}\in\alpha\cdot  \Gamma_{\lambda}~, \ \lambda\in\Lambda~,\]
so that the map $\gamma:\Lambda\rightarrow\C^k$ is holomorphic and the map $A_\lambda:=\sqrt{d}\cdot U+\gamma(\lambda)$ induces an isogeny of the abelian variety $\mathcal{T}_\lambda$. Moreover, we have $\mathcal{T}_{\tau_{1,1}}\simeq \mathcal{T}_{t_0}$ and $f_{t_0}$ is induced by $A_{\tau_0}$. Hence, $f_{t_0}$ belongs to a family of Latt\`es maps which is parameterized by $\Lambda$. By Proposition~\ref{prop_samedim}, this family has dimension at least $q_*$ in moduli, and $q\geq q_*$.

~

We now prove the converse inequality by contradiction, assuming $q>q_*$. Assume first that $q_*=0$. Since $q>0$, we again can find a holomorphic disk $\D\subset\Lambda$ with $\dim\Pi_\Delta(\D)=1$ where $\Delta$ is the type of the family $(\mathcal{T}_t)_{t\in D}$. By assumption, $U\in\mathbb{U}(k)$ is diagonal and has no integer eigenvalue. Hence, $U=\mathrm{diag}(u_1,\ldots,u_k)$ with $|u_i|=1$ and $u_i^2\neq1$. On the other hand, if $\sigma_t:\C^k\rightarrow\p^k$ is the Galois cover making the following diagram commute
\begin{center}
$\xymatrix {\relax
\C^k\ar[r]^{A_t} \ar[d]_{\sigma_t} & \C^k\ar[d]^{\sigma_t} \\
\p^k \ar[r]_{f_t} & \p^k~,}$
\end{center}
for any $t\in \D$, any $z\in\C^k$, and any $a\in\Gamma_t$, we find
\[\sigma_t(\sqrt{d}\cdot U(z)+\gamma_t)=f_t(\sigma_t(z))=f_t(\sigma_t(z+a))=\sigma_t(\sqrt{d}\cdot U(z+a)+\gamma_t).\]
This gives $\sqrt{d}\cdot U(\Gamma_t)\subset\Gamma_t$ for all $t$. In particular, if $(e_1,\ldots,e_k)$ is the canonical basis of $\C^k$
\[\sqrt{d}\cdot ( U\circ \tau_t)\cdot e_i=\tau_t\cdot \underline{u}+\Delta\cdot \underline{v}\]
for some $\underline{u},\underline{v}\in\Z^k$, and $\tau_t$ is a non-constant holomorphic map of the parameter $t$ by assumption.
Since $\sqrt{d}\cdot U$ is diagonal, this gives
\[\sqrt{d}\cdot u_i \sum_{j=1}^k\tau_{t,i,j}\delta_{i,j}=\sum_{j=1}^k\tau_{t,i,j}u_j+\Delta_{i,j}v_j~.\]
This is impossible, since it gives that $\tau_t$ is constant on $\D$ hence that $(\mathcal{T}_t)$ is an isotrivial family.

Assume finally that $q_*>0$. As in the proof of Proposition \ref{prop_samedim}, since $\mathcal{M}_d(\p^k)$ is a geometric quotient, for any $t_0\in\Lambda$ there exists a local dimension $q$ complex submanifold $\Lambda_0\subset\Lambda$ containing $t_0$ and such that the canonical projection $\Pi:\Lambda_0\longmapsto \mathcal{M}_d(\p^k)$ has discrete fibers over its image.

Notice that for any $t_0\in X$, there exists a polydisk $\D^{q_*}$ centered at $t_0$ in $X_0$, as described above. By assumption, there exists a disk $\D$ centered at $t_0$ which is transverse to $\D^{q_*}$.
As above, write $\Gamma_t=\tau(t)\Z^k\oplus\Delta\Z^k$ and decompose $\tau(t)$:
\[\tau(t):=\left(\begin{array}{cc}
\tau_{1,1}(t) & \tau_{1,2}(t)  \\
\tau_{1,2}(t)^t & \tau_{2,2}(t)
\end{array}\right)~,\]
where $\tau_{i,j}(t)$ is holomorphic on $t\in \D$. Let $\Gamma'_t:=\tau_{2,2}(t)\Z^k\oplus\Delta'\Z^k$, where
\[\Delta'=\mathrm{diag}(d_{k-q_*+1},\ldots,d_k)\]
and the $d_i$'s are given by $\Delta=\mathrm{diag}(d_1,\ldots,d_k)$ with $d_i\mid d_{i+1}$.
Up to reducing $\D$, we may assume $\Gamma_t'$ is not $G_{\Delta'}$-equivalent to $\Gamma_{t_0}'$ and that $\Delta'$ is the type of the family $(\mathcal{T}_t')_{t\in \D}$ of abelian varieties defined as $\mathcal{T}_t':=\C^{k-q_*}/\Gamma_t'$. We now let $U'=\mathrm{diag}(u_{k-q_*+1},\ldots,u_k)\in\mathbb{U}(k-q_*)$ where, again, the $u_i$'s are given by $U=\mathrm{diag}(u_1,\ldots,u_k)$. We finally let $\gamma_t'=(\gamma_{t,k-q_*+1},\ldots,\gamma_{t,k})\in\C^{k-q_*}$ and $A'_t:=\sqrt{d}\cdot U'+\gamma_t'$. The family $(\mathcal{T}_t',A_t)_t$ induces a family of Latt\`es maps on $\p^{k-q_*}$ with $\dim(U'\mp I_{k-q_*})=0$. We thus have reduced to the case $q_*=0$.
\end{proof}

An immediate corollary is

\begin{corollary}
The dimension in moduli of $(f_t)_{t\in \Lambda}$ is non-zero if and only if $d$ is the square of an integer and the linear map $U$ has at least one integer eigenvalue (which must be $-1$ or $1$).
\end{corollary}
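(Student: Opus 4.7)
My plan is to read off the corollary directly from the formula
\[ \dim_{\mathcal{M}}(f_t,X) = (q_+ + q_-)\cdot\frac{q_+ + q_- + 1}{2} \]
established in Theorem~\ref{tm:rigidlattesnewversion}. First I would observe that this integer is strictly positive if and only if $q_+ + q_- \geq 1$, i.e.\ if and only if the unitary matrix $U$ admits $+1$ or $-1$ as an eigenvalue. Since every eigenvalue of $U\in\mathbb{U}(k)$ has modulus one, the only integers that can occur as eigenvalues of $U$ are $\pm 1$, so $q_+ + q_- \geq 1$ is equivalent to $U$ having at least one integer eigenvalue.

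Next I would explain why $\dim_{\mathcal{M}}(f_t,X) > 0$ also forces $d$ to be a perfect square. The idea is to revisit the explicit construction in the proof of Theorem~\ref{tm:rigidlattesnewversion}: when $q_* := q_+ + q_- > 0$, the positive-dimensional family is realized by letting the $(q_*\times q_*)$-block $\tau_{1,1}(\lambda)=\lambda$ of the period matrix vary over $\mathcal{H}_{q_*}$, with the common isogeny $\sqrt{d}\,U + \gamma_\lambda$ preserving each lattice $\Gamma_\lambda$. Restricted to the $\pm 1$-eigenspaces of $U$, the linear part $\sqrt{d}\,U$ acts as $\pm\sqrt{d}\cdot I_{q_*}$, and requiring this scalar multiplication to be compatible with the varying lattice $\Gamma_\lambda$ for $\lambda$ running through an open subset of $\mathcal{H}_{q_*}$ forces $\sqrt{d}\in\Z$. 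Equivalently, the $q_*$-dimensional polarized abelian subvariety carved out by the top block has generic endomorphism ring $\Z$, so any real scalar endomorphism must be a rational integer.

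The converse is immediate: if $d$ is a perfect square and $U$ has an integer eigenvalue, then $q_+ + q_- \geq 1$ and the formula of Theorem~\ref{tm:rigidlattesnewversion} gives $\dim_{\mathcal{M}}(f_t,X) \geq 1$. I do not anticipate any serious obstacle; the entire corollary is essentially a translation of the theorem's formula, combined with the standard fact that the generic member of a non-trivial analytic family of polarized abelian varieties has endomorphism ring equal to $\Z$.
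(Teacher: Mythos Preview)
The paper records this corollary with no proof (``An immediate corollary is''), so your derivation from the formula of Theorem~\ref{tm:rigidlattesnewversion} is already more explicit than what the authors provide. Your first paragraph is exactly right and is the entire argument the paper has in mind: the quantity $(q_++q_-)(q_++q_-+1)/2$ is positive iff $q_++q_-\ge 1$, and since $U\in\mathbb{U}(k)$ has all eigenvalues of modulus one, this happens precisely when $U$ admits $\pm 1$ (equivalently, an integer) as an eigenvalue.

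Your handling of the ``$d$ is a perfect square'' clause deserves a comment. Your first formulation---``the positive-dimensional family is realized by varying $\tau_{1,1}$, and lattice-preservation then forces $\sqrt d\in\Z$''---is mildly circular as written: that construction is how the proof of Theorem~\ref{tm:rigidlattesnewversion} \emph{manufactures} a $q_*$-dimensional family, not a description guaranteed to apply to an arbitrary maximal family already handed to you. Your second formulation, via the generic endomorphism ring, is the correct repair: by Proposition~\ref{prop_samedim} the associated family of abelian varieties has positive dimension in moduli, the constant linear part $\sqrt d\,U$ is an isogeny of every $\mathcal T_t$, and on the $\pm 1$-eigenspace of $U$ it acts as the real scalar $\pm\sqrt d$; since a non-isotrivial family of polarized abelian varieties has generic member with endomorphism ring $\Z$, that real scalar must lie in $\Z$. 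The paper's own proof of the closely related Theorem~\ref{tm:rigidLattes} invokes the Remark preceding Theorem~\ref{tm:rigidlattesnewversion}, but that Remark only says $E_m(\sqrt d\,U)=\{0\}$ for \emph{integers} $m$ when $d$ is not a square---it does not by itself exclude $\ker(U\mp I_k)\neq\{0\}$---so your extra argument genuinely fills a small gap the authors leave implicit.
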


We now derive Theorem~\ref{tm:rigidLattes} from Theorem~\ref{tm:rigidlattesnewversion}.
\begin{proof}[Proof of Theorem~\ref{tm:rigidLattes}]
If $\dim_{\mathcal{M}}(f_\lambda,\Lambda)=0$, then by Theorem~\ref{tm:rigidlattesnewversion}, $0=q_{\sqrt d}+q_{-\sqrt d}$.  Since both $q_{\pm\sqrt d}\geq0$, we must have $q_{\sqrt d}=q_{-\sqrt d}=0$.

On the other hand, suppose $\dim_{\mathcal{M}}(f_\lambda,\Lambda)>0$.  Again by Theorem~\ref{tm:rigidlattesnewversion},  at least one of $q_{\sqrt d}$ and $q_{-\sqrt d}$ must be positive.  Recall that $U$, the linear part of $A_t$, is unitary.  Thus, we have $q_m:=\dim_\C E_{m}(\sqrt{d}\cdot U)=0$ for all $m\neq\pm\sqrt{d}$ and $q_m=0$ for all $m$ if $d$ is not the square on an integer.  Since one or both of $q_{\pm\sqrt d}$ must be positive, the result follows.
\end{proof}

\section{Symmetric product Latt\`es maps}\label{sec:symLattes}

In this section, we prove Theorem \ref{tm:symmlattes}.  First, though, note that among symmetric products, we can easily characterize Latt\`es ones.

\begin{proposition}
Let $k,d\geq2$ and let $f\in\Hom_d(\p^1)$, and let $F\in\Hom_d(\p^{k})$ be the $k$-fold symmetric product of $f$. Then $F$ is a Latt\`es map if and only if $f$ is a Latt\`es map. Moreover, if $\mathcal{T}$ is an elliptic curve, $\mathcal{I}:\mathcal{T}\to\mathcal{T}$ is affine and $\Theta:\mathcal{T}\to\p^1$ is the Galois cover such that
\begin{center}
$\xymatrix {\relax
\mathcal{T} \ar[r]^{\mathcal{I}} \ar[d]_{\Theta} & \mathcal{T}\ar[d]^{\Theta} \\
\p^1 \ar[r]_{f} & \p^1}$
\end{center}
is a commutative diagram, then the following diagram commutes:
\begin{center}
$\xymatrix {\relax
\mathcal{T}^k \ar[r]^{(\mathcal{I},\ldots,\mathcal{I})} \ar[d]_{\eta_k\circ(\Theta,\ldots,\Theta)} & \mathcal{T}^k\ar[d]^{\eta_k\circ(\Theta,\ldots,\Theta)} \\
\p^{k} \ar[r]_{F} & \p^{k}}$
\end{center}
\label{prop:lattes}
\end{proposition}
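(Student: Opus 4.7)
The plan decomposes the equivalence into two implications. The ``$f$ Latt\`es $\Rightarrow$ $F$ Latt\`es'' direction I would do by a direct diagram chase, which also simultaneously establishes the ``Moreover'' assertion. The reverse implication, which is less constructive, I would handle by a Lyapunov-exponent comparison resting on Lemma~\ref{lm:lyap} together with the known characterization of Latt\`es maps as minimizers of the sum of Lyapunov exponents.

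For the forward direction, suppose $f$ is Latt\`es with data $(\mathcal{T},\mathcal{I},\Theta)$. Then $\mathcal{T}^k$ is an abelian variety of dimension $k$, the self-product $(\mathcal{I},\ldots,\mathcal{I})$ is again affine, and $(\Theta,\ldots,\Theta):\mathcal{T}^k\to(\p^1)^k$ is a finite Galois branched cover (with Galois group $\Gamma^k$, where $\Gamma$ is the Galois group of $\Theta$). By taking the product of the Latt\`es square for $f$ with itself $k$ times, this cover intertwines $(\mathcal{I},\ldots,\mathcal{I})$ with $(f,\ldots,f)$. Post-composing with $\eta_k$ and invoking the defining identity $\eta_k\circ(f,\ldots,f)=F\circ\eta_k$ of the symmetric product then yields the commuting diagram claimed in the ``Moreover'' part. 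The composed cover $\eta_k\circ(\Theta,\ldots,\Theta)$ is finite Galois with Galois group the wreath product $\Gamma\wr\mathfrak{S}_k$, so the data $(\mathcal{T}^k,(\mathcal{I},\ldots,\mathcal{I}),\eta_k\circ(\Theta,\ldots,\Theta))$ exhibits $F$ as a Latt\`es map.

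For the reverse direction, suppose $F$ is Latt\`es. I would invoke the Briend--Duval lower bound $L(G)\geq k\log\sqrt{d}$ for all $G\in\Hom_d(\p^k)$ together with its equality case: a map achieves this bound if and only if it is Latt\`es. (In dimension one this is Zdunik's theorem; in higher dimension, the full characterization is the Berteloot--Dupont theorem cited earlier in the paper.) Thus $L(F)=k\log\sqrt{d}$. Combining this with Lemma~\ref{lm:lyap} gives
\[
L(f)=\frac{L(F)}{k}=\log\sqrt{d},
\]
which is the minimal value allowed for a degree $d$ endomorphism of $\p^1$. By Zdunik's theorem, $f$ is Latt\`es.

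The main obstacle, and the deepest ingredient in the argument, is the equality case in the Briend--Duval bound, i.e.\ the rigidity characterization of Latt\`es maps by their Lyapunov spectrum; however this is a standard tool and is already in the background of the paper via the discussion preceding Lemma~\ref{lm:lyap}. Everything else is formal diagram chasing.
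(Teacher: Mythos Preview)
Your proposal is correct and matches the paper's proof almost exactly: the paper also invokes Lemma~\ref{lm:lyap} together with the Zdunik and Berteloot--Dupont characterizations of Latt\`es maps as Lyapunov minimizers, and handles the ``Moreover'' clause by stacking the two commuting squares. The only cosmetic difference is that the paper uses the Lyapunov argument symmetrically for \emph{both} implications, whereas you establish the forward direction constructively via the diagram chase (which is arguably more natural, since it yields the ``Moreover'' statement at the same time and avoids invoking Berteloot--Dupont for that half).
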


\begin{proof}
The characterization of $L(F)$, the sum of Lyapunov exponents of $F$, in terms of $L(f)$ from Lemma~\ref{lm:lyap} provides a path for a simple proof.  Berteloot and Dupont \cite{bertelootdupont} proved that $F$ is Latt\`es if and only $L(F)=k\log \sqrt{d}$. A similar result was known by \cite{zdunik} in dimension $1$. Namely, $f\in\Hom_d(\p^1)$ is Latt\`es if and only if $L(f)=\log \sqrt{d}$. The fact that $F$ is Latt\`es if and only if $f$ is Latt\`es follows directly from Lemma~\ref{lm:lyap}. Assume now that $f$ is a Latt\`es map induced by $\mathcal{I}:\mathcal{T}\to\mathcal{T}$ under the Galois cover $\Theta:\mathcal{T}\to\p^1$. By construction of $F$, the following diagram commutes:
$$\xymatrix {\relax
\mathcal{T}^k \ar[r]^{(\mathcal{I},\ldots,\mathcal{I})} \ar[d]_{(\Theta,\ldots,\Theta)} & \mathcal{T}^k\ar[d]^{(\Theta,\ldots,\Theta)} \\
(\p^1)^{k} \ar[d]_{\eta_k}\ar[r]_{(f,\ldots,f)} & (\p^1)^{k}\ar[d]^{\eta_k}\\
\p^{k} \ar[r]_{F} & \p^{k}}$$
which concludes the proof.
\end{proof}

We now come to the proof of Theorem~\ref{tm:symmlattes}.

\begin{proof}[Proof of Theorem~\ref{tm:symmlattes}]
It is classical that $f$ is rigid if and only if the linear part of the induced isogeny $A$ is not an integer (see~\cite{Milnor}). We now let $F$ be the $k$-fold symmetric product of $f$. We have the following alternatives:
\begin{enumerate}
\item either $q_{\sqrt{d}}+q_{-\sqrt{d}}=0$,
\item or $q_{\sqrt{d}}=k$ (or $q_{-\sqrt{d}}=k$).
\end{enumerate}
We now just have to apply Theorem~\ref{tm:rigidLattes}.
\end{proof}

\paragraph*{\textit{In dimension 2.}}
An \emph{algebraic web} is given by a reduced curve $C\subset(\p^2)^\star$, where $(\p^2)^\star$ is the dual projective plane consisting of lines in $\p^2$. The web is invariant for a holomorphic map $f$ on $\p^2$ if every line in $\p^2$ belonging to $C$ is mapped to another such line \cite{dabija2010algebraic}.

In~\cite[Theorems~4.2 $\&$ 4.4]{Rong}, Rong Feng gave the following description of Latt\`es maps on $\p^2$: pick a Latt\`es map $F\in\Hom_d(\p^2)$, then
\begin{itemize}
\item either $F$ or $F^2$ is the $2$-fold symmetric product of a Latt\`es map $f\in\Hom_d(\p^1)$ (resp. $f\in\Hom_{d^2}(\p^1)$),
\item or $F$, $F^2$, $F^3$ or $F^6$ is a holomorphic map preserving an algebraic web associated to a smooth cubic.
\end{itemize}

Combined with Theorem~\ref{tm:symmlattes}, this description directly gives the following result.

\begin{corollary}\label{cor:rigid}
Let $F\in\Hom_d(\p^2)$ be a $2$-fold symmetric product Latt\`es map. Then
\begin{enumerate}
\item either $F$ is a rigid $2$-fold symmetric product,
\item or $F$ is a $2$-fold symmetric product which belongs to a family of Latt\`es maps which has dimension $3$ in moduli. In that case, $F$ can be approximated by Latt\`es maps admitting a suitable iterate which preserve an algebraic web associated to a smooth cubic. More precisely, any maximal subfamily of symmetric products has dimension $1$ in moduli.
\end{enumerate}
\end{corollary}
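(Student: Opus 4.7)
The proof applies Theorem~\ref{tm:symmlattes} with $k=2$ and combines its output with Rong's dichotomy. If $f$ is rigid then so is $F$ by Theorem~\ref{tm:symmlattes}(1), giving case~(1). Otherwise $F$ belongs to a maximal family $(F_\lambda)_{\lambda\in\Lambda}$ of Latt\`es maps of moduli-dimension $k(k+1)/2=3$, and the remainder of the proof establishes two further facts: that any maximal subfamily of symmetric products in $(F_\lambda)_{\lambda\in\Lambda}$ has moduli-dimension $1$, and that a generic member of $(F_\lambda)_{\lambda\in\Lambda}$ falls into Rong's ``algebraic web'' alternative.

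For the first fact, let $(F_\mu)_{\mu\in M}$ be a maximal subfamily of symmetric-product Latt\`es maps containing $F$, and write $F_\mu=\eta_2(f_\mu)$. By Proposition~\ref{prop1}(2), $\eta_2$ induces an embedding of moduli spaces, so $\dim_{\mathcal M}(F_\mu,M)=\dim_{\mathcal M}(f_\mu,M)$. Milnor's theorem (recalled in the introduction) bounds the right-hand side by $1$, with equality since $f$ is non-rigid. Thus every maximal symmetric-product subfamily has moduli-dimension exactly $1$, strictly less than $3$; in particular $F$ can be approximated in $(F_\lambda)_{\lambda\in\Lambda}$ by Latt\`es maps $G$ which are not themselves $2$-symmetric products.

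For the second fact, Rong's dichotomy allows the extra possibility that $G^2$ (rather than $G$) is the $2$-symmetric product of some $h\in\Hom_{d^2}(\p^1)$. Applying the argument of the previous paragraph to degree $d^2$, the symmetric-product Latt\`es locus in $\mathcal M_{d^2}(\p^2)$ is $1$-dimensional in moduli; and since the squaring map $[G]\mapsto[G^2]$ has finite fibres on the Latt\`es locus (a square root of a Latt\`es endomorphism is determined by a square root of the underlying isogeny, of which there are only finitely many), its preimage of this $1$-dimensional locus is again at most $1$-dimensional in $\Lambda$. Hence for generic $G$ close to $F$ in $\Lambda$, neither $G$ nor $G^2$ is a $2$-symmetric product, and Rong's classification forces some iterate among $G,G^2,G^3,G^6$ to preserve an algebraic web associated to a smooth cubic. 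The main obstacle is verifying this finite-fibre property of the squaring map on Latt\`es moduli; everything else is a direct consequence of the tools already assembled, namely Theorem~\ref{tm:symmlattes}, the embedding in Proposition~\ref{prop1}(2), Milnor's classification in dimension one, and Rong's structural result.
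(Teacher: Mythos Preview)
Your proof is correct and follows the same overall strategy as the paper: invoke Theorem~\ref{tm:symmlattes} for the rigid/non-rigid dichotomy, identify the symmetric-product sublocus as $1$-dimensional in moduli, and then feed the remaining (generic) members of the $3$-dimensional family into Rong's classification. The paper's own proof is extremely terse and differs from yours only in how it disposes of the possibility that $G^2$ (rather than $G$) is a $2$-symmetric product.

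Where you argue via a finite-fibre property of the squaring map on Latt\`es moduli, the paper instead observes that the family $(F_t^m)_t$ is again a \emph{maximal} family of symmetric products for every $m\geq1$. The point is that iteration does not change the underlying abelian variety, so by Proposition~\ref{prop_samedim} the $3$-dimensional family $(F_\lambda)_\lambda$ iterates to a family $(F_\lambda^m)_\lambda$ that still has moduli dimension $3$, while the symmetric-product locus in degree $d^m$ (again by Milnor together with the embedding of Proposition~\ref{prop1}) is $1$-dimensional. Hence the preimage under $\lambda\mapsto[F_\lambda^m]$ of this $1$-dimensional locus is a proper analytic subset of $\Lambda$, without any need to check that fibres are literally finite. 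Your finite-fibre argument is correct (square roots of a fixed isogeny on a fixed torus are finite in number), but it proves more than is needed; the paper's route is a bit more economical because it recycles Proposition~\ref{prop_samedim} rather than introducing a new finiteness statement.
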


\begin{proof}
According to Theorem~\ref{tm:symmlattes}, either $F$ is rigid or $F$ belongs to a family of Latt\`es maps which has dimension $3$ in moduli. Now, if $F$ belongs to such a family, there is a maximal subfamily $(F_t)_t$ which consists of symmetric products which has dimension $1$. The family $(F^m_t)_t$ is also a maximal family of symmetric products for all $m\geq1$. The result follows from Rong's classification.
\end{proof}

\paragraph*{\textit{Explicit examples.}}
We now give exlicit examples.

\begin{exmp}[Rigid Latt\`es maps of $\p^2$ and $\p^3$]
In \cite{Milnor}, Milnor provided an example of degree 2 Latt\`es map $f\in\rat_2$:
\[f\colon[z:t]\mapsto[z^2+a^2zt:t^2+a^2zt]\]
(where $a=i\sqrt2$, $a=(1\pm\sqrt7)/2$, or $a=1\pm i$).  As $\deg(f)$ is not the square of an integer, $f$ has to be isolated in $\mathcal{M}_2$ (see e.g. \cite{Milnor}). The 2-fold symmetric product of $f$ is $F\colon\mathbb P^2\rightarrow\mathbb P^2$ given by
\[\left[z:t:w\right] \mapsto \left[F_1:F_2:F_3\right]~,\]
where $F_1(z,t,w)=z^2+a^2zt+a^4zw$, $F_2(z,t,w)=t^2+a^2zt+a^2tw+2(a^4-1)zw$ and $F_3(z,t,w)=w^2+a^2tw+a^4zw$. We now turn to the $3$-fold symmetric product of $f$.

The 3-fold symmetric product of $f$ is $G\colon\mathbb P^3\rightarrow\mathbb P^3$ given by
\[\left[z:t:w:u\right] \mapsto \left[G_1:G_2:G_3:G_4\right],\]
where
\begin{eqnarray*}
G_1(z,t,w,u)&=&z^2+a^2zt+a^4zw+a^6zu,\\
G_2(z,t,w,u)&=&t^2 + a^4tu + a^2tw + a^2zt + 3a^6zu + 2a^4zw - 3a^2zu - 2zw,\\
G_3(z,t,w,u)&=&w^2 + a^4zw + a^2tw + a^2wu + 3a^6zu  + 2a^4tu  - 3a^2zu  - 2tu,\\
G_4(z,t,w,u)&=& u^2 + a^2wu + a^4tu + a^6zu.
\end{eqnarray*}
By Theorem \ref{tm:rigidLattes}, since $f$ is rigid, both maps $F$ and $G$ are rigid Latt\`es maps, respectively, in the moduli spaces $\mathcal{M}_2(\p^2)$ and $\mathcal{M}_2(\p^3)$.
\end{exmp}

\begin{exmp}[Flexible Latt\`es maps of $\p^2$]
The one-dimensional family $(f_\lambda)_{\lambda\in\C\setminus\{0,1\}}$ of degree $4$ maps
\[f_\lambda:[z:t]\in\p^1 \, \longmapsto \, [(z^2-\lambda t^2)^2:4zt(z-t)(z-\lambda t)]\]
is a family of flexible Latt\`es maps of $\p^1$. The family of $2$-fold symmetric products $F_\lambda:\p^2\rightarrow\p^2$ given for $\lambda\in\C\setminus\{0,1\}$ by
\[\left[z:t:w\right] \mapsto \left[F_{1,\lambda}:F_{2,\lambda}:F_{3,\lambda}\right],\]
where
\begin{eqnarray*}
F_{1,\lambda}(z,t,w)&=& ((z+\lambda w)^2-\lambda t^2)^2,\\
F_{2,\lambda}(z,t,w)&=& 4((z+\lambda w)^3t+2(\lambda+1)(z+\lambda w)^2zw+\lambda(z+\lambda w)t^3-8\lambda zwt(z+\lambda w)\\
& & -(\lambda+1)(z^2t^2+\lambda^2t^2w^2))~,\\
F_{3,\lambda}(z,t,w)&=& 16zw(z-t+w)(z-\lambda t+\lambda^2w)
\end{eqnarray*}
is a family of flexible Latt\`es maps of $\p^2$. Moreover, by Corollary \ref{cor:rigid}, it is a \emph{strict subfamily} of a $2$-dimensional family of flexible Latt\`es maps.
\end{exmp}

\end{document}